\newcommand{\Haus}{\dim_{\mathrm{H}}}
\newcommand{\Assouad}{\dim_\mathrm{A}}
\newcommand{\boxd}{\dim_\mathrm{B}}
\newcommand{\uboxd}{\overline{\dim_\mathrm{B}}}
\newcommand{\lboxd}{\underline{\dim_\mathrm{B}}}
\newtheorem{thm}{Theorem}[section]
\newtheorem{ques}[thm]{Question}
\newtheorem{lma}[thm]{Lemma}
\newtheorem{defn}[thm]{Definition}
\newtheorem{conj}[thm]{Conjecture}
\newtheorem*{thm*}{Theorem}
\newtheorem*{conj*}{Conjecture}
\newtheorem*{lma*}{Lemma}
\begin{document}
	
	\title{Dimensions of triangle sets}
	
	\author{Han Yu}
	\address{Han Yu\\
		School of Mathematics \& Statistics\\University of St Andrews\\ St Andrews\\ KY16 9SS\\ UK \\ }
	\curraddr{}
	\email{hy25@st-andrews.ac.uk}
	\thanks{}

	\subjclass[2010]{Primary: 28A80, Secondary:    52C10,     52C15,     52C35 }
	
	\keywords{combinatorial geometry, distance set, finite configuration}
	
	\date{}
	
	\dedicatory{}
	
	\begin{abstract}
		In this paper we discuss some dimension results for triangle sets of compact sets in $\mathbb{R}^2$. In particular we prove that for any compact set $F$ in $\mathbb{R}^2$, the triangle set $\Delta(F)$ satisfies
		\[
		\dim_{\mathrm{A}} \Delta(F)\geq \frac{3}{2}\dim_{\mathrm{A}} F.
		\]
		If $\dim_{\mathrm{A}} F>1$ then we have
		\[
		\dim_{\mathrm{A}} \Delta(F)\geq 1+\dim_{\mathrm{A}} F.
		\]
		If $\Assouad F>4/3$ then we have the following  better bound,
		\[
		\Assouad \Delta(F)\geq \min\left\{\frac{5}{2}\Assouad F-1,3\right\}.
		\]
		Moreover, if $F$ satisfies a mild separation condition then the above result holds also for the box dimensions, namely,
		\[
		\underline{\dim_{\mathrm{B}}} F\geq \frac{3}{2}\underline{\dim_{\mathrm{B}}} \Delta(F) \text{ and }\overline{\dim_{\mathrm{B}}} F\geq \frac{3}{2}\overline{\dim_{\mathrm{B}}} \Delta(F).
		\]
	\end{abstract}

	\maketitle
	
	\section{Introduction}
	The main goal of this paper is to count triangles in a given subset of $\mathbb{R}^2$. To be precise, for any set $F\subset\mathbb{R}^2$ we consider the following triangle set
	\[
	\Delta(F)=\left\{r_1,r_2,r_2\in\mathbb{R}_{+}^3: \exists x,y,z\in F, r_1=|x-y|, r_2=|z-y|, r_3=|x-z|  \right\}.
	\] 
	When $F$ is a finite set, $\#\Delta(F)$ is roughly the number of different triangles spanned by $F$, where we used $\#B$ for the cardinality of a set $B$.  Here we consider two triangles to be the same if they are isometric with respect to the Euclidean metric. In fact, $\#\Delta(F)$ counts a particular triangle at least once and at most $6$ times depending on whether the triangle is equilateral, isosceles or scalene.
	
	On one hand, estimating the size of the triangle set $\Delta(F)$ in terms of the size of $F$ can be naturally considered as a ``higher order simplex" version of the distance set problem, as we can regard line segments as $1$-simplices and triangles as $2$-simplices. Some Hausdorff dimension results for triangle sets and more generally $n$-simplex sets can be found in \cite{GI1}, \cite{GI2}, \cite{GILP15}. On the other hand, the study of triangle sets belongs to the more general class of problems which can be best described as ``finding patterns". For example, consider $F\subset\mathbb{R}^2,$ how large must $F$ be to ensure that $F$ contains three vertices of an equilateral triangle? In \cite{M} is was shown that a set of full Hausdorff dimension can avoid containing three vertices of equilateral triangles. However, in \cite[Corollary 1.7]{CLP} it was shown that if $F\subset\mathbb{R}^2$ satisfies some regularity condition concerning Fourier decay then $F$ contains three vertices of an equilateral triangle. For more recent results see \cite{SV} and the references therein.
	
	Now we introduce a bilinear separation condition which is motivated by \cite[Bilinear Distance Conjecture 1.5]{KT}. In what follows we will encounter the notions of dimensions, covering numbers and approximation symbols. They are discussed in detail in Section \ref{Notation}.
	\begin{defn}\label{BiSep}
		For a set $F\subset\mathbb{R}^2$, we say that $F$ is weak-bilinearly separated if:
		\begin{itemize}
			\item[(1):]There exist three disjoint sets $F_1,F_2,F_3$ such that
			\[
			F_1\cup F_2\cup F_3\subset F
			\]
			and such that
			\[
			\min\{\lboxd F_1,\lboxd F_2,\lboxd F_3\}=\lboxd F.
			\]
			\item[(2):]There is a positive number $r>0$ such that any triple $(x,y,z)\in F_1\times F_2\times F_3$ forms a triangle such that
			\[
			|\langle x-y,y-z\rangle|=|x-y||y-z||\sin \theta(x-y,y-z)|>r.
			\] 
			Here $\langle.,.\rangle$ denotes the Euclidean outer product and $\theta(x-y,y-z)$ is the angle spanned by $x-y,y-z$.
		\end{itemize}
		We say that $F$ is strong-bilinarly separated if the condition $(1)$ above is replaced by the following stronger condition.
		\begin{itemize}
			\item[(1'):] There are three disjoint sets $F_1,F_2,F_3$ such that
			\[
			F_1\cup F_2\cup F_3\subset F
			\]
			and such that for all $\delta\in (0,1),$
			\[
			\min\{N_\delta(F_1),N_\delta(F_2),N_\delta(F_3)\}\gtrsim N_\delta(F).
			\]
		\end{itemize}
		
	\end{defn}
	The intuitive idea behind the above bilinear separations is that $F$ can be essentially decomposed into three large parts. If we take one point from each part, the three points form a triangle which is far away from being degenerate. Following this idea, it is natural to guess that if $F$ is not small, then we can find many triangles in $F$. The following theorem formulates this idea more precisely.
	
	\begin{thm}\label{Main}
		Let $F\subset\mathbb{R}^2$ be a compact and weak-bilinearly separated set. Then we have the following inequality
		\[
		\uboxd\Delta(F)\geq \frac{3}{2}\lboxd F.
		\]
		If $F$ is strong-bilinearly separated then we have
		\[
		\lboxd\Delta(F)\geq \frac{3}{2}\lboxd F \text{ and }\uboxd\Delta(F)\geq \frac{3}{2}\uboxd F.
		\]
	\end{thm}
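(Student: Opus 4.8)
\emph{Proof strategy.} The plan is to work at a single scale $\delta>0$ and prove the quantitative statement
\[
N_\delta(\Delta(F))\gtrsim_r \big(\min\{N_\delta(F_1),N_\delta(F_2),N_\delta(F_3)\}\big)^{3/2},
\]
from which both conclusions follow by choosing the scales: if $F$ is strong-bilinearly separated then $\min_i N_\delta(F_i)\gtrsim N_\delta(F)$ at every scale, giving the $\lboxd$ and $\uboxd$ inequalities at once, whereas under weak separation the three covering numbers are only comparable to $N_\delta(F)$ along suitable sequences $\delta_k\to 0$ (this is all one gets from $\min_i\lboxd F_i=\lboxd F$), and the $\limsup$ this forces is exactly why the weak statement concerns $\uboxd$. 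Fix $\delta$, let $E_i\subset F_i$ be a maximal $\delta$-separated set so $\#E_i=N_\delta(F_i)$, and put $M=\min_i\#E_i$. By hypothesis $(2)$ every triple $(x,y,z)\in E_1\times E_2\times E_3$ spans a triangle of area $\tfrac12|\langle x-y,y-z\rangle|>r/2$, and since all its side lengths are $\le\mathrm{diam}\,F$, each of its angles $\theta$ satisfies $\sin\theta\gtrsim_r 1$. Covering $[0,\mathrm{diam}\,F]^3$ by $\delta$-cubes, it suffices to bound below the number of cubes met by $\{(|x-y|,|y-z|,|x-z|):(x,y,z)\in E_1\times E_2\times E_3\}$.

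The first ingredient is a rigidity remark. Suppose $(\bar r_1,\bar r_2,\bar r_3)$ is the (discretised) shape of such a triangle and we are handed two of its vertices, say $x'\in E_1$ and $y'\in E_2$ with $|x'-y'|\in[\bar r_1,\bar r_1+\delta)$; then an admissible third vertex $z'$ lies in the intersection of the $\delta$-annulus of radius $\approx\bar r_2$ about $y'$ with the $\delta$-annulus of radius $\approx\bar r_3$ about $x'$. The two bounding circles cross at the angle of the triangle at $z'$, which is $\gtrsim_r 1$, so this intersection has diameter $\lesssim_r\delta$ and contains only $O_r(1)$ points of the $\delta$-separated set $E_3$. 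Consequently a given shape-cube is the image of at most $O_r(1)\cdot I_1(\bar r_1)$ triples, where $I_1(\rho):=\#\{(x,y)\in E_1\times E_2:|x-y|\in[\rho,\rho+\delta)\}$, and likewise with the roles of the three sides permuted.

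The heart of the matter is a ``no popular distance'' estimate: for the $\delta$-discretisations of genuinely $s$-dimensional compact sets one has $\sup_\rho I_1(\rho)\lesssim M^{3/2}$, up to subpolynomial factors that do not affect box dimensions. I would obtain this by double counting: with $d'_\rho(x)=\#\{y\in E_2:|x-y|\in[\rho,\rho+\delta)\}$ one has $\sum_x d'_\rho(x)=I_1(\rho)$ and $\sum_x d'_\rho(x)^2=I_1(\rho)+\#\{(x,y,y'):y\ne y',\ |x-y|\approx\rho\approx|x-y'|\}$; bounding the last term by splitting the pairs $(y,y')$ into those whose two radius-$\rho$ circles cross transversally (only $O(1)$ common annulus points each) and the remaining ``tangential'' pairs, with $|y-y'|$ near $0$ or near $2\rho$, whose contribution is controlled because $E_2$ is the discretisation of a set of diameter $\sim1$ and box dimension $s$, one arrives at $\sum_x d'_\rho(x)^2\lesssim(\#E_1)(\#E_2)$, and Cauchy--Schwarz then gives $I_1(\rho)^2\le(\#E_1)\sum_x d'_\rho(x)^2\lesssim M^3$. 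Granting this, one finishes in either of two equivalent ways: pigeonhole the $M^3$ triples into shape-cubes, the heaviest carrying $\le O_r(1)\sup_\rho I_1(\rho)$ of them; or apply Cauchy--Schwarz to $\sum_\tau(\mathrm{mult}\,\tau)^2\lesssim_r M^3\sup_\rho I_1(\rho)$. Both yield $N_\delta(\Delta(F))\gtrsim_r M^3/M^{3/2}=M^{3/2}$, and passing to the limit as described completes the proof.

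The step I expect to be the real obstacle is precisely the discretised popular-distance estimate. The transversal pairs are harmless, but the tangential ones (nearly coincident radius-$\rho$ circles, or circles meeting almost tangentially at centre-distance $\approx2\rho$) can a priori carry many common annulus points, and excluding a configuration in which one distance is abnormally popular is exactly where one must exploit that each $F_i$ is a \emph{fixed} compact set of box dimension $s$ — so its $\delta$-pieces are genuinely spread over all scales down to $\delta$ — rather than an arbitrary $\delta$-separated point set. It is also here that the comparability of the three parts' covering numbers enters, which is the source of the restriction to $\uboxd$ in the weakly separated case.
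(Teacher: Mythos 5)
The step you yourself flag as the obstacle — the discretised popular-distance estimate $\sup_\rho I_1(\rho)\lesssim M^{3/2}$ — is a genuine gap, and it is not what the paper proves. Your double counting $I_1(\rho)^2\le(\#E_1)\sum_x d'_\rho(x)^2$ requires bounding the number of triples $(x,y,y')$ with $y,y'\in E_2$ and $|x-y|,|x-y'|\in[\rho,\rho+\delta)$. When $|y-y'|$ is close to $2\rho$ (nearly antipodal) or close to $\delta$ (nearly coincident), the two $\delta$-annuli of radius $\rho$ about $y$ and $y'$ meet almost tangentially, and their intersection can contain far more than $O(1)$ $\delta$-separated points — of order $\sqrt{\rho/\delta}$ in the antipodal regime, and of order $\rho/|y-y'|$ in the coincident one. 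Nothing in weak-bilinear separation excludes such pairs: Definition~\ref{BiSep} bounds the angle at a vertex of a triangle drawn \emph{across} $F_1\times F_2\times F_3$, whereas here both $y$ and $y'$ lie in the same piece $F_2$, and their mutual distance is unconstrained. The paper circumvents exactly this by organizing the incidence differently in Lemma~\ref{lmDCR}: for a fixed pair $(x_1,x_2)$ of peripheral vertices subject to condition~(3), i.e.\ with $|x_1-x_2|\in(r_2-r_1+\gamma,r_1+r_2-\gamma)$, the two annuli of the two \emph{distinct} radii $r_1,r_2$ about $x_1,x_2$ in which the apex $c$ must lie cross transversally, so there are only $O(1)$ admissible centres per pair. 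That transversality is exactly what the bilinear separation hypothesis provides; the one your double count needs is not.

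There is a second, related, inconsistency. Your target bound $N_\delta(\Delta(F))\gtrsim_r M^{3/2}$ with $M=\min_i N_\delta(F_i)$, if it held, would hold at every small scale; and under weak separation one already has $M\gtrsim\delta^{-\lboxd F+\epsilon}$ at \emph{every} small scale (this is precisely the content of $\min_i\lboxd F_i=\lboxd F$), not merely along a subsequence. So if you really had that estimate you would get $\lboxd\Delta(F)\geq\frac{3}{2}\lboxd F$ in the weak case too — strictly more than the theorem claims. Your account of why only $\uboxd$ comes out does not actually follow from the bound you set out to prove. The paper's single-scale inequality is weaker and of a different shape, roughly $N_\delta(\Delta(F))\gtrsim M^3/\left(N_\delta(F)^{3/2}\log(1/\delta)\right)$, obtained by combining Lemma~\ref{lmDCR} with the trivial bound $N_\delta(F)\ge N_1$ and a dyadic pigeonhole; the factor $N_\delta(F)^{3/2}$ in the denominator is the source of the loss, and it is controllable (i.e.\ $N_\delta(F)\le\delta^{-\lboxd F-\epsilon}$) only along a good subsequence in the weak case, which is exactly why the conclusion there is only for $\uboxd$.
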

	
	\begin{thm}\label{coMain}
		Let $F\subset\mathbb{R}^2$ be a compact set. Then the following result holds
		\[
		\Assouad \Delta(F)\geq \frac{3}{2}\Assouad F.
		\]
		If $\Assouad F>1$ then we have the following better bound,
		\[
		\Assouad \Delta(F)\geq 1+\Assouad F.
		\]
		If $\Assouad F>4/3$ then we have the following even better bound,
		\[
		\Assouad \Delta(F)\geq \min\left\{\frac{5}{2}\Assouad F-1,3\right\}.
		\]
	\end{thm}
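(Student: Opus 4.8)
The plan is to reduce everything to Theorem~\ref{Main} by passing to weak tangents, using the standard characterisation of the Assouad dimension of a closed set as the maximal upper box dimension attained among its weak tangents (which, for a compact subset of $\mathbb{R}^2$, may always be realised inside a fixed square). The elementary link between $F$ and $\Delta(F)$ at this level is the following: if $E$ is a weak tangent of $F$, realised as $E=\lim_k T_k(F)\cap[0,1]^2$ with $T_k$ a homothety of ratio $\lambda_k\to\infty$, then $\Delta(T_k(F)\cap[0,1]^2)=\lambda_k\,\Delta\bigl(F\cap T_k^{-1}([0,1]^2)\bigr)\subseteq\lambda_k\,\Delta(F)$; since all these sets lie in the fixed cube $[0,\sqrt2]^3$ and $0\in\Delta(F)$, a subsequential Hausdorff limit is a weak tangent $D$ of $\Delta(F)$, and it contains $\Delta(E)$ because every triple of points of $E$ is a limit of triples of points of $T_k(F)\cap[0,1]^2$. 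Hence
\[
\Assouad\Delta(F)\ \ge\ \uboxd D\ \ge\ \uboxd\Delta(E).
\]
So for each of the three asserted inequalities it suffices to exhibit a weak tangent $E$ of $F$ realising the corresponding lower bound for $\uboxd\Delta(E)$ with some $t<\Assouad F$ in place of $\Assouad F$, and then let $t\uparrow\Assouad F$.

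Fix $t<\Assouad F$; by definition there are balls $B(x_0,R)$ with $N_r(F\cap B(x_0,R))\gtrsim(R/r)^t$ and $R/r$ arbitrarily large. I would then perform a scale-by-scale pigeonholing inside such a ball, tiling the current ball by $\sim M^2$ sub-balls of a fixed fraction of its radius (for a large constant $M$): if the covering mass concentrates on at most two of them, or on a collinear sub-family, pass to the richest sub-ball and repeat; otherwise one has produced three sub-balls in general position each still carrying a definite fraction of the mass. The concentrated alternatives cannot persist, since staying in them for $j$ stages forces $N_r\gtrsim M^{-j}(R/r)^t$ in a ball of radius $R/M^{j}$, contradicting the trivial upper bound $(R/(M^{j}r))^{2}$ once $j$ exceeds a definite fraction of $\log_M(R/r)$; and in the residual collinear case (which for this weak tangent can only occur when $t\le1$, a tangent collinear at all scales having upper box dimension at most $1$) one instead bounds $\uboxd\Delta(E)$ below by twice the dimension of the relevant pinned distance set, which one checks already gives at least $\tfrac32 t$. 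Rescaling the three general-position sub-balls gives, in the limit, a compact weak tangent $E$ of $F$ that is weak-bilinearly separated, its three pieces each carrying box dimension arbitrarily close to $t$; Theorem~\ref{Main} then yields $\uboxd\Delta(E)\ge\tfrac32 t$, and letting $t\uparrow\Assouad F$ gives the first inequality.

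For the sharper bounds one exploits that a large Assouad dimension forces stronger non-concentration of the weak tangent. When $\Assouad F>1$ the tangent $E$ can be arranged so that its $\delta$-neighbourhood has area $\gtrsim\delta^{2-t}$ with $2-t<1$ — so $E$ is genuinely planar at every scale, not merely two-dimensional along a sequence of scales — and when $\Assouad F>4/3$ one obtains, in addition, quantitative non-concentration near circles. Feeding these into a refinement of the configuration count behind Theorem~\ref{Main}, in which the three general-position rich pieces are now chosen so that the side-length map loses no degrees of freedom to near-collinear, respectively near-concyclic, degeneracies, yields $\uboxd\Delta(E)\ge 1+t$ in the first regime and $\uboxd\Delta(E)\ge\min\{\tfrac52 t-1,3\}$ in the second, the cap at $3$ being forced by $\Delta(F)\subseteq\mathbb{R}^3$. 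Transferring these through the first paragraph and letting $t\uparrow\Assouad F$ completes the proof.

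The step I expect to be the main obstacle is the construction in the second and third paragraphs: extracting, from a weak tangent that merely realises the Assouad dimension, a tangent in the required ``normal form'' — three disjoint, bilinearly separated pieces rich at a common sequence of scales — despite lower box dimension being badly behaved under finite unions, together with the residual near-collinear case when $t\le 1$; and, for the improved bounds, propagating the quantitative non-concentration through the fibred counting so that the relevant bases and fibres are rich at coinciding scales. By contrast the transfer principle of the first paragraph, the appeals to Theorem~\ref{Main}, the bipolar-coordinate bi-Lipschitz estimates, and the scale bookkeeping are routine.
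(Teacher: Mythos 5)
Your first paragraph is essentially the content of the paper's Lemma \ref{lm1} and the weak-tangent transfer principle, and it is correct. The general plan for the first inequality — pass to a weak tangent, either exhibit three bilinearly separated rich pieces and invoke Theorem \ref{Main}, or fall into a collinear residual case where $\lboxd\Delta(E)\ge 2\lboxd E$ already does the job — is also the paper's plan. However, the paper's execution is cleaner than what you sketch: it first invokes the K\"aenm\"aki--Ojala--Rossi result (Lemma \ref{lm2}) to replace $F$ by a weak tangent $E$ with $\Haus E=\Assouad E=s$ \emph{and} $\mathcal{H}^s(E)>0$. Positive Hausdorff measure is stable under intersection with cubes and under finite unions in a way lower box dimension is not, so the three dyadic cubes are chosen so that each piece has positive $\mathcal{H}^s$ measure, and the ``collinear'' alternative is identified cleanly as the support of $\mathcal H^s$ being a line segment. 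This neatly sidesteps precisely the obstacle you yourself flag — ``lower box dimension being badly behaved under finite unions'' — and replaces your multi-scale pigeonholing with a single measure-theoretic dichotomy. Your sketch might be made rigorous, but you are working much harder than necessary, and the rigorisation is genuinely nontrivial.

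The real gap is in the third paragraph. The improved bounds $1+s$ (for $s>1$) and $\min\{\tfrac52 s-1,3\}$ (for $s>4/3$) do \emph{not} come from ``refinements of the configuration count behind Theorem \ref{Main}'' exploiting non-concentration of the tangent. For $s>1$ the paper's key input is Shmerkin's pinned distance set theorem (stated as Theorem \ref{S}): one finds, inside the weak tangent with $\Haus=s>1$, a point $x\in F_1$ with $\Haus D_x(F_2)=1$, and then pairs each distance $|x-y|$ with an $(s-\epsilon)$-rich supply of points $z\in F_3$ to build an almost-injective family of triangles, giving $\lboxd\Delta\ge 1+s$. There is no $\delta$-neighbourhood-area argument of the kind you propose, and nothing you describe would deliver the exponent $1+s$ rather than some weaker bound. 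For $s>4/3$ the paper proves a Hausdorff-dimension lower bound $\Haus\Delta(F)\ge 2s+\gamma_s-2n+3$ (Theorem \ref{HAR1}) by the Greenleaf--Iosevich--Liu--Palsson Mattila-integral machinery: one estimates $\int\nu_\delta^2$, passes to Littlewood--Paley pieces, and crucially invokes a spherical Fourier decay estimate $\int_{S^{n-1}}|\hat\mu(t\tau)|^2\,d\tau\lesssim t^{\epsilon-\gamma_s}$ with $\gamma_s$ given by the Wolff--Erdo\u gan-type exponent. The exponent $\tfrac52 s-1$ is $2s+\gamma_s-1$ with $n=2$ and $\gamma_s=s/2$; it is not a product of improved combinatorial counting, and your proposed ``quantitative non-concentration near circles'' fed back into Lemma \ref{lmDCR} would not reproduce it. So while the first inequality follows (modulo the technical warning above) from your approach, the second and third are not genuinely argued in your proposal: they need Shmerkin's theorem and the GILP spherical-averages estimate, respectively, neither of which appears in your sketch.
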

	Note that for the Assouad dimension result we do not need any bilinear separation condition on $F$. In this paper, we will use three different methods to study the triangle sets. For the first part of Theorem \ref{Main} we will use a direct and straightforward counting method, see Lemma \ref{lmDCR} and for Theorem \ref{coMain} we will make use of the fact that there is a close relationship between the triangle sets and the distance sets. For the second part of Theorem \ref{coMain} we shall use a harmonic analysis method introduced in \cite{GILP15}. All these three methods have their advantages. The distance set method is more powerful in $\mathbb{R}^2$ and the direct counting method, as well as the harmonic analysis method, has a potential to be generalized for higher dimensional situations, see Section \ref{Fur}.

	\section{Notations}\label{Notation}
	We briefly introduce the notions of dimensions in this section. For more details on the Hausdorff and box dimensions, see \cite[Chapters 2,3]{Fa} and \cite[Chapters 4,5]{Ma1}. For the Assouad dimension see \cite{F} for more details.
	
	We shall use $N_r(F)$ for the minimal covering number of a set $F$ in $\mathbb{R}^n$ by cubes of side length $r>0$. 
	\subsection{Hausdorff dimension}
	
	For any $s\in\mathbb{R}^+$ and $\delta>0$ define the following quantity
	\[
	\mathcal{H}^s_\delta(F)=\inf\left\{\sum_{i=1}^{\infty}(\mathrm{diam} (U_i))^s: \bigcup_i U_i\supset F, \mathrm{diam}(U_i)<\delta\right\}.
	\]
	The $s$-Hausdorff measure of $F$ is
	\[
	\mathcal{H}^s(F)=\lim_{\delta\to 0} \mathcal{H}^s_{\delta}(F).
	\]
	The Hausdorff dimension of $F$ is
	\[
	\Haus F=\inf\{s\geq 0:\mathcal{H}^s(F)=0\}=\sup\{s\geq 0: \mathcal{H}^s(F)=\infty          \}.
	\]
	\subsection{Box dimensions}
	The upper box dimension of $F$ is
	\[
	\overline{\boxd} F=\limsup_{r\to 0} \left(-\frac{\log N_r(F)}{\log r}\right).
	\]
	The lower box dimension of $F$ is
	\[
	\lboxd F=\liminf_{r\to 0} \left(-\frac{\log N_r(F)}{\log r}\right).
	\]
	If the upper and lower box dimensions of $F$ are equal then we call this value the box dimension of $F$ and we denote it as $\boxd F.$
	\subsection{The Assouad dimension and weak tangents}
	The \textit{Assouad dimension} of $F$ is 
	\begin{align*}
	\Assouad F = \inf \Bigg\{ s \ge 0 \, \, \colon \, (\exists \, C >0)\, (\forall & R>0)\,  (\forall r \in (0,R))\, (\forall x \in F) \\ 
	&N_r(B(x,R) \cap F) \le C \left( \frac{R}{r}\right)^s \Bigg\}
	\end{align*}
	where $B(x,R)$ denotes the closed ball of centre $x$ and radius $R$.
	
	We now introduce the notion of \emph{weak tangent} which is very useful in studying the Assouad dimension. We start by defining the Hausdorff distance between two compact sets $A,B \subset \mathbb{R}^n$ as follows
	\[
	d_{\mathcal{H}}(A,B)=\inf\{\delta>0: A\subset B_{\delta}, B\subset A_{\delta}\},
	\]
	where $A_\delta$ is the closed $\delta$-neighbourhood of $A$. Let $X=[0,1]^n$ for some $n$ then $(\mathcal{K}(X),d_\mathcal{H})$, the space of compact subsets of $X$, is a compact metric space.
	
	\begin{defn}
		We call $D\in \mathcal{K}(X)$ a \emph{miniset} of $F\in\mathcal{K}(R^n)$ if $D = \left(cF + t\right)\cap X$ for some constants $c\ge 1$ and $t\in \mathbb{R}^n$. A set $E \in \mathcal{K}(X)$ is called a \emph{weak tangent} if it is the limit of a sequence of minisets (under the Hausdorff metric).
	\end{defn}
	Due to \cite{Fr,MT} we know that for compact subsets $F\subset \mathbb{R}^n$
	\[
	\Assouad F \ge \sup \left\{ \Assouad E \colon E \textrm{ is a weak tangent of }F \right\}.
	\]
	The following result can be found in \cite[Proposition 5.7]{KOR} the positivity of Hausdorff measure can be found in \cite[Theorem 1.3]{Fr}.
	\begin{lma}[KOR]\label{lm2}
		Let $F$ be a compact set with $\Assouad F=s$. Then there exist a weak tangent $E$ of $F$ such that
		\[
		\Haus E=s,
		\]
		moreover we have
		\[
		\mathcal{H}^s(E)>0.
		\]
	\end{lma}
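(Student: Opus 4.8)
Since the lemma is quoted from \cite[Proposition 5.7]{KOR} and \cite[Theorem 1.3]{Fr}, the task is to reconstruct that argument; the plan is as follows. It suffices to produce a single weak tangent $E$ of $F$ with $\mathcal{H}^s(E)>0$, because this already forces $s\le\Haus E\le\Assouad E\le\Assouad F=s$ --- the last two inequalities being the general bound $\Haus\le\Assouad$ and the weak tangent monotonicity recorded just before the lemma --- so in particular $\Haus E=s$ as well.

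To construct $E$ I would first unwind the definition of $\Assouad F=s$: for each $j$ choose $x_j\in F$ and scales $0<r_j<R_j$ with $N_{r_j}(B(x_j,R_j)\cap F)>j\,(R_j/r_j)^{s-1/j}$. Writing $\rho_j:=r_j/R_j$ and $N_{r_j}(B(x_j,R_j)\cap F)=\rho_j^{-(s-\epsilon_j)}$, the Assouad upper bound $N_{r_j}(B(x_j,R_j)\cap F)\le C(R_j/r_j)^{s}$ forces both $\rho_j\to 0$ and $\epsilon_j\to 0$. Rescaling $F$ about $x_j$ by $R_j^{-1}$ and translating (with a bounded extra factor so that $B(x_j,R_j)$ lands inside $X=[0,1]^n$) yields minisets $D_j$ of $F$ with $N_{\rho_j}(D_j)\gtrsim\rho_j^{-(s-\epsilon_j)}$. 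By compactness of $(\mathcal{K}(X),d_{\mathcal H})$ one passes to a subsequence along which $D_j\to E$, and then $E$ is a weak tangent of $F$ by definition.

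The naive way to finish --- push forward the normalised counting measure of a maximal $\rho_j$-separated subset of $D_j$ and take a weak-$*$ limit --- only bounds the mass of a ball of radius $t$ by a constant multiple of $t^{s}\rho_j^{-\epsilon_j}$, and $\rho_j^{-\epsilon_j}$ need not stay bounded as $j\to\infty$. The real content, and what I expect to be the main obstacle, is a \emph{regularisation across scales}: before passing to the limit one pigeonholes over the $\asymp\log(R_j/r_j)$ dyadic scales inside $B(x_j,R_j)$ to locate a sub-ball on which the associated rescaled set is, down to some scale $\rho_j'\to 0$, approximately Ahlfors $s_j$-regular with $s_j\to s$, i.e.\ $N_{\rho_j'}(B(y,t)\cap D_j)\asymp(\rho_j')^{-s_j}t^{s_j}$ uniformly for $y\in D_j$ and $t\in[\rho_j',1]$. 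The upper bound here is automatic from $\Assouad F=s$; it is the uniform lower bound across many consecutive scales that requires the pigeonhole, and this step is genuinely needed, since a large covering number at a single scale says nothing about the Hausdorff dimension of a limit. Replacing the original minisets by these regularised ones, the counting measures $\mu_j$ then satisfy $\mu_j(B(y,t))\lesssim t^{s_j}$ for all $y\in D_j$ and $t\in[\rho_j',1]$.

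Finally I would take a weak-$*$ limit $\mu$ of the $\mu_j$; it is a probability measure supported on $E$. Fixing $y\in E$ and $t\in(0,1)$ and combining $s_j\to s$, $\rho_j'\to 0$ with the semicontinuity of mass under weak-$*$ convergence gives $\mu(B(y,t))\le C't^{s}$ for all such $y$ and $t$. The mass distribution principle then yields $\mathcal{H}^s(E)\ge (C')^{-1}\mu(E)=(C')^{-1}>0$, completing the proof.
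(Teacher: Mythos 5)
The paper does not actually prove this lemma; it is quoted from \cite[Proposition 5.7]{KOR} together with \cite[Theorem 1.3]{Fr}, so there is no in-paper argument to compare against. Your reconstruction is sound and follows the route taken in those sources: reduce to producing a single weak tangent $E$ with $\mathcal{H}^s(E)>0$ (which forces $s\le\Haus E\le\Assouad E\le\Assouad F=s$), extract minisets from near-extremal Assouad scale pairs, recognise --- correctly --- that a large covering number at a single bottom scale gives no uniform Frostman estimate on the limit, and therefore pigeonhole over the $\asymp\log(R_j/r_j)$ intermediate dyadic scales to locate a sub-ball that is close to $s$-regular across a long run of scales before taking a weak-$*$ limit and invoking the mass distribution principle. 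Two points of precision are worth flagging. First, your intermediate claim $N_{\rho_j'}(B(y,t)\cap D_j)\asymp(\rho_j')^{-s_j}t^{s_j}$ asserts two-sided Ahlfors regularity; only the upper half (the Frostman bound) is needed for the mass distribution principle, and only the upper half is what the pigeonhole argument actually delivers uniformly in $y$ and $t$ --- the lower bound at intermediate scales is not guaranteed and should not be claimed. Second, $\Assouad F=s$ gives $N_r(B(x,R)\cap F)\le C_{s'}(R/r)^{s'}$ only for exponents $s'>s$, with $C_{s'}$ blowing up as $s'\downarrow s$, not a clean $(R/r)^s$ bound as you wrote; this does not affect the derivation of $\rho_j\to0$ and $\epsilon_j\to0$ (use any fixed $s'>s$, or the trivial ambient bound $(R/r)^n$), but the gap $s'-s>0$ is precisely the source of the obstruction you go on to identify, so it is worth stating exactly.
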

	In particular, we see that for compact subsets $F\subset \mathbb{R}^n,$
	\[
	\Assouad F = \max \left\{ \Haus E \colon E \textrm{ is a weak tangent of }F \right\}.
	\]
	\subsection{Approximation symbols for box counting}

	When counting covering numbers, it is convenient to introduce notions $\approx, \lesssim, \gtrsim$ for approximately equal, approximately smaller and approximately larger. As our box counting procedure always involves scales, later we use $1>\delta>0$ to denote a particular scale. Then for two quantities $f(\delta), g(\delta)$ we define the following:
	\[
	f\lesssim g\iff \exists M>0, \forall \delta>0, f(\delta)\leq M g(\delta).
	\] 
	\[
	f\gtrsim g\iff g\lesssim f.
	\]
	\[
	f\approx g\iff f\lesssim g \text{ and } g\lesssim f.
	\]
	Thus $f\approx g$ in our box counting procedure can be intuitively read as ``$f$ and $g$ give the same box dimension". Later in context, $f,g$ can be either box covering number of scale $\delta$ or Lebesgue measure of a $\delta$ neighbourhood of a set.
	
	\section{Some recent results}
	In this section we review some results on triangle sets $\Delta(F)$ for finite $F\subset\mathbb{R}^2$. A result by \cite{GK} shows that there exists a constant $C>0$ such that
	\[
	\#D(F)\geq C \frac{\#F}{\log \#F},
	\]
	where we write $D(F)$ for the distance set of $F$ defined as follows,
	\[
	D(F)=\{|x-y|: (x,y)\in F\times F\}.
	\]
	From here (see \cite[section 6]{GI1} and the references therein) it is possible to show that
	\[
	\#\Delta(F)\geq C \frac{\#F^2}{\log \#F}.
	\] 
	
	For Borel sets, by a result due to \cite[Theorem 1.5]{GILP15} it is possible to deduce the following result.
	\begin{thm}[GILP]\label{GIth}
		Let $F\subset\mathbb{R}^2$ be a Borel set. If $\Haus F>\frac{8}{5}$ then $\Delta(F)$ has positive Lebesgue measure.
	\end{thm}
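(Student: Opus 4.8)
I would prove this by the energy/pushforward method from geometric measure theory. First I would reduce to compact sets: since the Hausdorff dimension of an analytic set is the supremum of $\Haus K$ over compact $K\subset F$, there is a compact $K\subset F$ with $\Haus K>8/5$ and $\Delta(K)\subset\Delta(F)$. Then I would realise $\Delta(K)$ as the image of $K\times K\times K$ under the side-length map $\Phi(x,y,z)=(|x-y|,|y-z|,|z-x|)\in\mathbb{R}^{3}$ and show that a natural measure on $\mathbb{R}^{3}$ carried by $\Delta(K)$ is absolutely continuous with $L^{2}$ density; positivity of $|\Delta(K)|$, hence of $|\Delta(F)|$, is then immediate. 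Concretely: by Frostman's lemma fix $s$ with $8/5<s\leq\Haus K$ and a Borel probability measure $\mu$ with $\mathrm{supp}\,\mu\subset K$ and $\mu(B(x,\rho))\lesssim\rho^{s}$ for all $x$ and $\rho>0$. Put $\nu=\Phi_{*}(\mu\times\mu\times\mu)$ and let $\nu^{\epsilon}$ be its $\epsilon$-thickening (i.e. $\nu$ convolved with an $L^{1}$-normalised bump on a cube of side $\epsilon$), so $\nu^{\epsilon}(t)\approx\epsilon^{-3}(\mu\times\mu\times\mu)\{(x,y,z):\Phi(x,y,z)\in\prod_{j=1}^{3}[t_{j},t_{j}+\epsilon]\}$. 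It suffices to show $\|\nu^{\epsilon}\|_{L^{2}(\mathbb{R}^{3})}\lesssim 1$ uniformly in $\epsilon\in(0,1)$, since then a weak-$\ast$ subsequential limit of $\nu^{\epsilon}$ is an $L^{2}$ function supported in the compact set $\Delta(K)$, forcing $|\Delta(K)|>0$.

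Next I would expand $\|\nu^{\epsilon}\|_{2}^{2}$ as a $6$-fold integral in $(x,y,z,x',y',z')$ against $\mu^{6}$ subject to the three matching conditions $|\,|x-y|-|x'-y'|\,|\lesssim\epsilon$, $|\,|y-z|-|y'-z'|\,|\lesssim\epsilon$, $|\,|z-x|-|z'-x'|\,|\lesssim\epsilon$, and peel the triangle off as a chain based at the vertex $y$: integrate out first the constraint putting $x$ on the $\epsilon$-arc of radius $\approx|x'-y'|$ about $y$ (and $x'$ on the corresponding arc about $y'$), then the constraint putting $z$ similarly. Each such step is a pinned circular-average expression, and on the Fourier side is governed by integrals of the shape $\int|\widehat{\mu}(\xi)|^{2}\,|\widehat{\sigma_{\epsilon}}(\xi)|\,d\xi$, with $\sigma$ arclength measure on the unit circle and $\sigma_{\epsilon}$ its mollification. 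The crucial feature is that both circular-average factors are pinned at the same pair $(y,y')$, so the bound does not split as a product of two independent distance-set estimates; this coupling is exactly why the exponent exceeds Wolff's $4/3$ for the plain distance set.

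The heart of the argument is the resulting quantitative estimate. Using $|\widehat{\sigma}(\xi)|\lesssim(1+|\xi|)^{-1/2}$ and a dyadic splitting $|\xi|\approx 2^{j}$ with $2^{j}\lesssim\epsilon^{-1}$, one is reduced to summing expressions built from $2^{-j/2}\mathcal{E}_{j}(\mu)$, where $\mathcal{E}_{j}(\mu)=\int_{|\xi|\approx 2^{j}}|\widehat{\mu}(\xi)|^{2}\,d\xi\lesssim 2^{j(2-s)}$ by the Frostman condition. This alone only recovers the $4/3$ exponent; to improve it one must exploit the extra structure of the second, coupled, circular average — either through the sharp $L^{2}$ mapping property of the circular averaging operator applied to an $s$-Frostman measure, or by invoking the $2$-simplex case of \cite{GILP15} directly — and run the numerology so that the series converges precisely when $s>8/5$. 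Making this final step go through, in particular correctly accounting for the interaction between the two pinned circles, is what I expect to be the main obstacle; the reductions in the previous two paragraphs are routine manipulations with Frostman measures and surface-measure Fourier decay.
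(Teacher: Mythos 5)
The paper does not prove this theorem: it is stated as a known result deduced from \cite[Theorem 1.5]{GILP15}, with no argument given. (The paper's own Section 7 does run a version of the GILP machinery, but only to extract the Hausdorff-dimension lower bound of Theorem \ref{HAR1}; when the exponent $2s+\gamma_s-2n+3$ exceeds $3$, the positive-measure conclusion is again simply attributed to \cite{GILP15}.) So there is no in-paper proof to compare against, and I will assess your sketch on its own terms.

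Your skeleton is right and matches the GILP strategy: reduce to compact $K$ by inner regularity; fix an $s$-Frostman measure with $s>8/5$; push forward $\mu\times\mu\times\mu$ under the side-length map to get $\nu$ on $\Delta(K)$; mollify to $\nu^\epsilon$; and show $\|\nu^\epsilon\|_{L^2}\lesssim 1$ uniformly in $\epsilon$, which yields an $L^2$ density for $\nu$ and hence $|\Delta(K)|>0$. Your "peel the triangle as a chain based at $y$" step is also morally what happens, though GILP implement it group-theoretically: they introduce $\nu_g=\mu*g\mu$ for $g\in\mathbb{O}(n)$ and bound the $6$-fold coincidence count by $\int_{\mathbb{O}(n)}\int\nu_g(B_\delta(z))\,d\nu_g(z)\,dg$, which after Littlewood--Paley decomposition collapses to the Mattila-type integral $\int\bigl(\int_{S^{n-1}}|\widehat\mu(t\tau)|^2\,d\tau\bigr)^2 t^{n-1}\,dt$ truncated at scale $\epsilon^{-1}$. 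Working directly with two coupled pinned circular averages (as you propose) is doable but messier, and the $\mathbb{O}(n)$ averaging is exactly what makes the bookkeeping clean.

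The genuine gap is the step you flag yourself and then punt on. Naive surface-measure decay $|\widehat\sigma(\xi)|\lesssim|\xi|^{-1/2}$ together with $\int_{|\xi|\approx 2^j}|\widehat\mu|^2\lesssim 2^{j(2-s)}$ does not get you to $8/5$ (and your arithmetic slips: $2^{-j/2}\cdot 2^{j(2-s)}=2^{j(3/2-s)}$ gives threshold $3/2$, not $4/3$). The entire content of GILP's $8/5$ is the sharper spherical $L^2$ restriction estimate \cite[Theorem 3.1]{GILP15}, namely $\int_{S^{n-1}}|\widehat\mu(t\tau)|^2\,d\tau\lesssim_{\epsilon,s} t^{\epsilon-\gamma_s}$ with $\gamma_s=(n+2s-2)/4$ for $s\in[n/2,(n+2)/2]$; in the plane this is $\gamma_s=s/2$. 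Feeding this into the Mattila integral on one factor and the Frostman bound on the other produces $\sum_{2^j\lesssim\epsilon^{-1}}2^{j(2n-2s-\gamma_s)}$, which is uniformly bounded precisely when $2n-2s-\gamma_s\le 0$, i.e.\ $s\ge 8/5$ when $n=2$. Without proving or quoting that spherical estimate your sketch is not a proof but a reduction to \cite{GILP15} — which is exactly what the paper itself does. If you want a self-contained argument you must supply the $\gamma_s=(n+2s-2)/4$ bound; if you are content to cite it, say so explicitly rather than gesturing at a "sharp $L^2$ mapping property."
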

	
	We note here a recent result on the distance sets that will be used later in this paper. Notice that the theorem we stated below is weaker than its original version \cite[Theorem 1.1]{S}. In what follows for $x\in\mathbb{R}^2$ we denote the pinned distance set $D_x(F)$ to be the following set,
	\[
	D_x(F)=\{|x-y|: y\in F\}.
	\]
	\begin{thm}\label{S}
		Let $F\subset\mathbb{R}^2$ be a Borel set with $\Haus F=\uboxd F>1$ then we have the following result,
		\[
		\Haus \{x\in\mathbb{R}^2:    \Haus D_x(F)<1\}\leq 1.
		\]
	\end{thm}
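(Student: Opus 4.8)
Since this is a known theorem, the natural approach --- and, as far as I know, essentially the only one --- is the additive-combinatorial method of \cite{S}, so I describe that plan. First I would argue by contradiction, assuming the exceptional set $X=\{x\in\mathbb{R}^2:\Haus D_x(F)<1\}$ has $\Haus X>1$. Fix exponents $t\in(1,\Haus X)$ and $s\in(1,\Haus F)$ and, by Frostman's lemma, pick probability measures $\nu$ on a compact subset of $X$ with $\nu(B(x,\rho))\lesssim\rho^{t}$ and $\mu$ on $F$ with $\mu(B(y,\rho))\lesssim\rho^{s}$. The hypothesis $\uboxd F=\Haus F$ enters here: it forces that at \emph{every} scale $\delta$ the measure $\mu$ is essentially uniformly spread over $\approx\delta^{-s}$ cubes of side $\delta$, with no intermediate scales at which $F$ is thinner than a set of dimension $s$, and this ``regularity at all scales'' is what lets the multiscale machinery below run without loss. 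Writing $d_x(y)=|x-y|$, the pushforward $(d_x)_*\mu$ is a measure on $\mathbb{R}_{+}$ supported in $D_x(F)$, and $\Haus D_x(F)<1$ forces, after localizing $\mu$ to a suitable piece, the $\delta$-discretizations of $(d_x)_*\mu$ to be supported on only $\lesssim\delta^{-(1-\varepsilon)}$ intervals for arbitrarily small $\delta$; so it suffices to prove that for such $\mu$ and $\nu$-almost every $x$, the discretized pinned distance measure is spread over $\gtrsim\delta^{-\alpha}$ intervals for every $\alpha<1$.

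Next comes the geometric reduction. In polar coordinates centred at $x$, the distance $|x-y|$ is the radial coordinate of the radial projection $y\mapsto(y-x)/|y-x|$, so $(d_x)_*\mu$ is the radial marginal of the radial image of $\mu$; at resolution $\delta$ the mass it puts on a length-$\delta$ interval is the $\mu$-mass of an annulus about $x$, and cutting that annulus into $\approx\delta^{-1}$ angular sectors exhibits $(d_x)_*\mu$ as a fibred version of a convolution of two $\delta$-discretized one-dimensional measures built from $\mu$. Concentration of $(d_x)_*\mu$ is thus controlled by the $L^q$ norm, for a fixed $q$ slightly larger than $2$, of this family of convolutions --- the reason to take $q>2$ rather than $q=2$ being precisely that the $L^2$ theory only delivers the threshold $\Haus F>3/2$ (or $>5/4$ with more recent input), whereas $q>2$ together with the inverse theorem below pushes the threshold down to $1$.

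The engine is then Shmerkin's inverse theorem for $L^q$ norms of convolutions: if a convolution of two $\delta$-measures is much more concentrated than the Cauchy--Schwarz heuristic predicts from their sizes, then at a positive proportion of the scales between $\delta$ and $1$ both measures look like uniform measures on sets that are either very small or very spread out. Feeding in the convolutions from the previous step --- one factor a localization of $\mu$, the other encoding the pin $x$ drawn from $\nu$ --- sustained concentration of the pinned distance measures over a $\nu$-positive set of pins would force either $\mu$, hence $F$, or $\nu$, hence $X$, to carry this ``arithmetic structure at many scales''. But a Frostman measure of exponent $s>1$ that is also uniformly spread at every scale (this is where $\uboxd F=\Haus F$ is used again) cannot be concentrated on structured small sets at a positive proportion of scales, and symmetrically $\nu$ cannot if $t>1$; either branch contradicts a dimension hypothesis, so $\Haus X\le 1$, which is the claim.

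I expect the main obstacle to be the middle, geometric, step: turning the pinned-distance problem into a clean statement about $L^q$ norms of convolutions demands uniform control of the angular fibres --- in effect a radial-projection estimate for $\mu$ from the pins --- and then turning the output of the inverse theorem into an honest contradiction needs a quantitative ``renormalize at many scales'' argument that exploits $\uboxd F=\Haus F$ carefully. The inverse theorem itself is the deepest ingredient but can be invoked as a black box.
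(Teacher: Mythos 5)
The paper does not actually prove this statement. Immediately before recording it, the paper writes ``We note here a recent result on the distance sets that will be used later in this paper. Notice that the theorem we stated below is weaker than its original version \cite[Theorem 1.1]{S}'', and then states the result without proof, treating it as a black box borrowed from Shmerkin \cite{S}. So there is no in-paper proof to compare your argument against.

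You correctly recognise that this is Shmerkin's theorem, and your sketch is a broadly accurate high-level description of his method: Frostman measures on a hypothetical exceptional set $X$ with $\Haus X>1$ and on $F$; discretization of the pinned distance measure $(d_x)_*\mu$ and a geometric reduction to $L^q$ (with $q>2$) bounds on convolutions of $\delta$-measures; Shmerkin's inverse theorem for $L^q$ norms of convolutions; and a multiscale contradiction exploiting $\Haus F=\uboxd F$ (Shmerkin's original hypothesis is equal Hausdorff and packing dimension, which the upper-box-dimension equality stated here implies, so the version here is indeed weaker as the paper says). That said, what you wrote is an account of the strategy, not a proof: the radial-projection/angular-fibre estimate, the exact form and application of the inverse theorem, and the scale-by-scale renormalization that turns ``structure at a positive proportion of scales'' into a dimension contradiction are all deferred, and you acknowledge as much. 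For the purposes of this paper, citing \cite{S} and moving on, as the author does, is the correct and only sensible option; filling in your sketch would amount to reproducing Shmerkin's paper, which is longer than the present one.
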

	\section{some combinatorics, following \cite{KY}}
	To prove Theorem \ref{Main} we need some combinatorial results. In this section, we shall prove Lemma \ref{lmDCR}. This result is based on a private communication with K. H\'{e}ra \cite{KY}. In order to improve readability and to make our paper self-contained, we provided detailed proofs here. We begin with two standard combinatorial tools.
	\begin{lma}[Chessboard]\label{chessboard}
		Let $n\geq 1$ be an integer. Let $\delta>0$ and consider a disjoint covering of $[0,1]^n$ with $\delta$-cubes. For any collection of $N\geq 100^n$ such cubes, we can find a sub-collection with at least $\frac{N}{100^n}$ cubes such that each two different cubes in this sub-collection are separated by at least $100\delta$. 
	\end{lma}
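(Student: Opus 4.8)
The plan is to prove the Chessboard Lemma by a direct pigeonhole argument on residue classes of cube indices. Each $\delta$-cube in the disjoint covering of $[0,1]^n$ is indexed by a vector $(i_1,\dots,i_n)\in\{0,1,\dots,\lceil 1/\delta\rceil-1\}^n$, where cube $(i_1,\dots,i_n)$ is $\prod_{k=1}^n[i_k\delta,(i_k+1)\delta)$. Two cubes with index vectors $I$ and $J$ have centres at distance at least $100\delta$ in the sup-metric (hence in the Euclidean metric, up to the harmless constant, but actually it is cleanest to work with the sup-metric and note that $|I-J|_\infty\geq 101$ forces the cubes to be separated by at least $100\delta$) whenever $|i_k-j_k|\geq 101$ for at least one coordinate $k$; conversely, to \emph{guarantee} separation we want $|i_k - j_k|\geq 101$ in \emph{every} coordinate, which is the safe sufficient condition.

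First I would reduce each coordinate modulo $101$: partition the index set into $101^n$ residue classes according to $(i_1\bmod 101,\dots,i_n\bmod 101)$. Given a collection of $N$ cubes, by pigeonhole one residue class contains at least $N/101^n$ of them. Any two distinct cubes in the same residue class have index vectors agreeing mod $101$ in every coordinate, so in each coordinate the difference is either $0$ or has absolute value at least $101$; since the vectors are distinct, at least one coordinate difference is a nonzero multiple of $101$, hence has absolute value $\geq 101$. This forces the corresponding coordinate projections of the two cubes to be separated by at least $100\delta$, so the cubes themselves are separated by at least $100\delta$ in the ambient metric. Since $101^n\leq ?$ — here one must be slightly careful, because the lemma asks for a sub-collection of size $N/100^n$, not $N/101^n$.

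The fix (and the only genuinely fiddly point) is the constant: either the statement should be read with $101^n$ in place of $100^n$ — harmless for all subsequent applications, since only the existence of \emph{some} dimensional constant matters — or one squeezes a little harder. To land exactly on $100^n$ one can instead reduce modulo $100$ rather than $101$: then two distinct cubes in the same class have, in some coordinate, a difference that is a nonzero multiple of $100$, hence of absolute value $\geq 100$, which makes the two cubes' projections onto that coordinate separated by $(100-1)\delta=99\delta$. That is \emph{almost} $100\delta$ but not quite; to repair it one notes that separation ``by at least $100\delta$'' can be obtained by first shrinking attention to, say, cubes whose centres we track and using that a gap of $99\delta$ between cube boundaries plus one full cube width gives $100\delta$ between far endpoints — in other words, the precise meaning of ``separated by at least $100\delta$'' (boundary-to-boundary versus centre-to-centre) absorbs the off-by-one. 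I would simply phrase the proof with the modulus chosen large enough (e.g. $101$) and remark that the constant $100^n$ in the statement is not sharp and any fixed dimensional constant suffices; this keeps the argument clean.

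I expect the main ``obstacle'' to be purely bookkeeping: making the separation claim precise (sup-metric versus Euclidean, centre-to-centre versus set-to-set) and matching it to whatever convention ``separated by at least $100\delta$'' is meant to carry in later applications. Mathematically there is no obstacle — it is the standard colouring/pigeonhole trick — so the write-up would be: (i) set up the index vectors; (ii) define the $101^n$ residue classes and apply pigeonhole to extract a class of size $\geq N/101^n\geq N/100^n$ once $N\geq 100^n$ is large enough to make this nonempty (indeed $N/100^n\geq 1$); (iii) verify the separation for two distinct cubes in a common class; (iv) conclude.
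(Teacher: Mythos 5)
Your residue-class pigeonhole argument is the same as the paper's, which likewise reduces each coordinate mod $100$ and picks the most populous of the $100^n$ classes. The off-by-one you worry about is resolved by the paper's convention: it identifies each cube with its integer index vector (one lattice point per cube), so that two indices differing by $\geq 100$ in some coordinate give, after scaling by $\delta$, representative points at Euclidean distance $\geq 100\delta$; note also that your alternative mod-$101$ version yields only $N/101^n$ cubes, strictly short of the stated $N/100^n$, though as you correctly observe the exact dimensional constant is immaterial in every subsequent application.
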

	\begin{proof}
		The idea can be presented in the following picture in $\mathbb{R}^2.$
		\begin{center}
			\begin{tikzpicture}
			\draw[step=0.25cm, gray,very thin] (-2,-2) grid (2,2);
			
			\foreach \x in {0,...,3}{
				\foreach \y in {0,...,3} {
					\fill[blue!40!white] (-2+\x+0.5,-2+\y+0.5) rectangle (-2+\x+0.25,-2+\y+0.25);}
			}
			\end{tikzpicture}
		\end{center}
		We can assume $\delta=1/K$ for some integer $K$. Then our $\delta$-cubes can be identified with a subset of $\mathbb{N}^n$. Namely
		\[
		A=\{(a_1,a_2,\dots,a_n)\in\mathbb{N}^n:\forall i\in [1,n], a_i\in [1,K]\}.
		\]
		Let $i\in [1,n], r_i\in [0,99]$ be $n$ integers. Consider the set
		\[
		A_{r_1,\dots,r_n}=\{(a_1,a_2,\dots,a_n)\in A: i\in [1,n],a_i\equiv r_i \mod 100\}.
		\]
		There are $100^n$ many such sets, they are pairwise disjoint and they form a decomposition of $A$. Therefore for any subset of $A$ with $N$ elements, we can find integers $i\in [1.n], r_i$ such that
		$
		A_{r_1,\dots,r_n}
		$
		contains at least $N/100^n$ many elements. By construction, two different points in $    A_{r_1,\dots,r_n}$ are at least $100$ separated. The proof concludes by scaling the whole configuration back by multiplying $\delta=K^{-1}$.
	\end{proof}
	\begin{lma}[Dyadic pigeonhole]\label{DyPi}
		Let $N$ be an integer, suppose there are $N$ positive integers $m_i,i\in\{1,\dots,N\}$ such that $\sum_{i} m_i=M$. Then we can find a positive integer $k$ such that $\sum_{i:m_i\in [2^{k-1},2^{k})}m_i\geq \frac{M}{\log M}$.
	\end{lma}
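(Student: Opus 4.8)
The plan is a routine dyadic pigeonhole, and I do not expect a real obstacle --- the only thing needing care is counting how many dyadic scales can occur, since that determines the loss factor.

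First I would note that, as each $m_i$ is a positive integer with $\sum_i m_i = M$, we have $1 \le m_i \le M$ for every $i$. So each $m_i$ falls into exactly one of the dyadic intervals $I_k = [2^{k-1},2^k)$, and only those with $k \in \{1,\dots,K\}$ can occur, where $K = \lfloor\log_2 M\rfloor + 1$ is the index of the interval containing $M$. Writing $S_k = \sum_{i : m_i \in I_k} m_i$, the intervals $I_1,\dots,I_K$ are disjoint and contain all the $m_i$, so $\sum_{k=1}^{K} S_k = M$; by the pigeonhole principle there is some $k_0$ with $S_{k_0} \ge M/K$.

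It then remains to rewrite $M/K$ in the stated form. Since $K = \lfloor\log_2 M\rfloor + 1 \le C\log M$ for a universal constant $C$ (for $M \ge 2$, with any fixed base of the logarithm), we get $S_{k_0} \ge C^{-1} M/\log M$; absorbing the constant $C^{-1}$ is harmless, as the conclusion enters Lemma \ref{lmDCR} only up to a multiplicative constant via the symbols $\lesssim,\gtrsim$. Hence $\sum_{i : m_i \in [2^{k_0-1},2^{k_0})} m_i \ge M/\log M$ and $k_0$ is the desired integer. If one wanted the literal constant, a finer count --- using that a nonempty scale $I_k$ already carries mass at least $2^{k-1}$, so only about $\log_2 S_{k_0}$ scales matter --- could be substituted, but it changes nothing essential.
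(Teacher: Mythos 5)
Your argument is correct and is essentially the paper's own proof: partition the indices by dyadic scale of $m_i$, observe that at most on the order of $\log M$ scales can be nonempty since $1\le m_i\le M$, and pigeonhole on the total mass $M$. The only difference is that you are more careful than the paper about the exact count of scales ($\lfloor\log_2 M\rfloor+1$ rather than $\log M$) and you correctly note that this constant is immaterial since the lemma is invoked only up to the $\lesssim,\gtrsim$ symbols.
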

	\begin{proof}
		For each $k\in\mathbb{N}$ consider
		\[
		D_k=\{i\in\{1,\dots,N\}: m_i\in [2^{k-1},2^{k})\}.
		\]
		It is clear that for each $i$ we have $1\leq m_i\leq M$ and therefore we have at most $\log M$ many non empty sets $D_k$. Therefore there is at least one $k$ such that
		\[
		\sum_{i\in D_k}m_i\geq \frac{M}{\log M}.
		\]    
	\end{proof}
	We will now prove the following combinatorial result.
	\begin{lma}[KY]\label{lmDCR}
		Let $0<r_1\leq r_2$ be positive real numbers. Let $\gamma>0$ be another positive number such that $0<10\gamma<r_2+r_1$. Let $\delta>0$ be a positive real number which can be chosen arbitrarily small. Let $\alpha,\beta$ be two non negative numbers such that we can find a set $C\subset\mathbb{R}^2$ such that:
		\begin{itemize}
			\item{1}: $C$ is $100\delta$ separated and $\#C=\delta^{-\alpha}$.
			\item{2}: For any $c\in C$, two circles $c+r_1S^1$ and $c+r_2S^1$ both contain $\delta^{-\beta}$ many $100\delta$ separated points. We denote the two sets of points as $C_1(c)$ and $C_2(c)$ respectively.
			\item{3}: For any $c\in C$, we have the following condition
			\[
			\#\left\{(x,y)\in C_1(c)\times C_2(c): r_2-r_1+10\gamma<|x-y|<r_2+r_1-10\gamma             \right\}\geq \frac{1}{2} \#C_1(c)\#C_2(c).
			\]
		\end{itemize}
		
		Then $\bigcup_{c\in C} (C_1(c)\cup C_2(c))$ contains $\gtrsim \delta^{-0.5\alpha-\beta}$ many $\delta$-separated points. The implicit constant in $\gtrsim$ depends on $r_1,r_2,\gamma$ but not on $\alpha, \beta$.
	\end{lma}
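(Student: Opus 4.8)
The plan is to write $S=\bigcup_{c\in C}\bigl(C_1(c)\cup C_2(c)\bigr)\subset\mathbb{R}^2$, put $M=N_\delta(S)$, and prove the key estimate $M\gtrsim\delta^{-0.5\alpha-\beta}$ by double counting the set of triples
\[
\mathcal{T}=\left\{(c,x,y)\,:\,c\in C,\ x\in C_1(c),\ y\in C_2(c),\ r_2-r_1+10\gamma<|x-y|<r_2+r_1-10\gamma\right\}.
\]
By conditions (1)--(3), for each $c\in C$ at least $\tfrac12\#C_1(c)\#C_2(c)=\tfrac12\delta^{-2\beta}$ of the pairs qualify, so summing over the $\delta^{-\alpha}$ points of $C$ gives the lower bound $\#\mathcal{T}\geq\tfrac12\delta^{-\alpha-2\beta}$. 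The content of the lemma is the matching upper bound $\#\mathcal{T}\lesssim M^2$ with implicit constant depending only on $r_1,r_2,\gamma$; combining the two yields $M\gtrsim\delta^{-0.5\alpha-\beta}$, and a routine packing--covering comparison (the $\delta$-balls about a maximal $\delta$-separated subset of $S$ cover $S$, and each side-$\delta$ cube meets $O(1)$ of these balls) then upgrades this to the assertion that $S$ contains $\gtrsim M\gtrsim\delta^{-0.5\alpha-\beta}$ many $\delta$-separated points.

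For the upper bound I would fix a minimal cover of $S$ by $M$ cubes of side $\delta$ and assign to each triple $(c,x,y)\in\mathcal{T}$ the ordered pair $(Q,Q')$ of cubes containing $x$ and $y$; since there are at most $M^2$ such pairs, it suffices to show that each pair receives only $O(1)$ triples. If $(Q,Q')$ receives some triple, fix one witness $(c^{\ast},x^{\ast},y^{\ast})$ and let $p_1,p_2$ be the two intersection points of the circles $x^{\ast}+r_1S^1$ and $y^{\ast}+r_2S^1$ (there are exactly two, since $|x^{\ast}-y^{\ast}|$ lies strictly between $r_2-r_1$ and $r_1+r_2$). For any other triple $(c,x,y)$ hitting $(Q,Q')$ we have $|x-x^{\ast}|,|y-y^{\ast}|\leq\sqrt2\,\delta$, and $c$ is an intersection point of $x+r_1S^1$ with $y+r_2S^1$, so the geometric estimate below forces $c\in B(p_1,K\delta)\cup B(p_2,K\delta)$ with $K=K(r_1,r_2,\gamma)$. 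Since $C$ is $100\delta$-separated, each of these two balls contains $O(K^2)$ points of $C$, giving $O(1)$ admissible $c$; and for each such $c$ the sets $C_1(c),C_2(c)$ are $100\delta$-separated while $\mathrm{diam}\,Q=\mathrm{diam}\,Q'=\sqrt2\,\delta<100\delta$, so $C_1(c)\cap Q$ and $C_2(c)\cap Q'$ have at most one point each. Hence $(Q,Q')$ is hit by $O(1)$ triples and $\#\mathcal{T}\lesssim M^2$.

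The geometric ingredient, and the step I expect to require the most care although it remains elementary, is the quantitative stability of the circle intersection. With $d=|x-y|$, $\hat e=(y-x)/d$ and $\hat e^{\perp}$ its rotation by $\pi/2$, the two intersection points of $x+r_1S^1$ and $y+r_2S^1$ are $x+a\hat e\pm h\hat e^{\perp}$, where $a=\frac{d^2+r_1^2-r_2^2}{2d}$ and $h=\sqrt{r_1^2-a^2}$, and one checks $r_1-a=\frac{(r_1+r_2-d)(d+r_2-r_1)}{2d}$ and $r_1+a=\frac{(d+r_1-r_2)(d+r_1+r_2)}{2d}$. The constraints $r_2-r_1+10\gamma<d<r_1+r_2-10\gamma$ bound each of $r_1+r_2-d$, $d+r_2-r_1$, $d+r_1-r_2$ and $d$ below by a positive quantity depending only on $r_1,r_2,\gamma$ (this is a genuine range: condition (2) gives $\#C_i(c)=\delta^{-\beta}\geq1$, so condition (3) forces the interval nonempty, i.e.\ $10\gamma<r_1\leq r_2$), whence $h\geq h_0(r_1,r_2,\gamma)>0$ and $d\geq10\gamma$ throughout. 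Consequently $a,h,\hat e$ are smooth functions of $(x,y)$ on this range with derivative bounds depending only on $r_1,r_2,\gamma$; hence $(x,y)\mapsto\{x+a\hat e\pm h\hat e^{\perp}\}$ is Lipschitz with a controlled constant, and perturbing $(x^{\ast},y^{\ast})$ by $\leq\sqrt2\,\delta$ in each coordinate moves each intersection point by $\leq K\delta$, which is exactly the estimate used above. Assembling the lower and upper bounds on $\#\mathcal{T}$ then gives the lemma.
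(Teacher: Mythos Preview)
Your argument is correct and follows essentially the same double-counting strategy as the paper: both bound the number of triples (centre, point on inner circle, point on outer circle) from below by $\tfrac12\delta^{-\alpha-2\beta}$ via hypothesis (3), and from above by $O(M^2)$ where $M=N_\delta(S)$, using that for a fixed pair of $\delta$-cubes the admissible centres lie in the $O(\delta)$-neighbourhood of the two intersection points of the associated circles. The only cosmetic difference is that the paper phrases the geometric input as ``the intersection of two thin transverse annuli is covered by two $A\delta$-squares'', whereas you give the equivalent Lipschitz stability estimate for the circle-intersection map $(x,y)\mapsto\{p_1,p_2\}$ explicitly; your version is slightly more detailed but proves the same fact.
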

	
	\begin{proof}
		We can create a $\delta$-grid and count how many disjoint $\delta$-squares we need to cover $\bigcup_{c\in C}C_1(c)\cup C_2(c)$. We have in total $ 2\delta^{-\alpha-\beta}$ many possible points, but they might not be all $\delta$-separated. We denote $\mathcal{F}$ the set of $\delta$-squares we need to cover all points. Consider the following incidence set
		
		\begin{eqnarray*}
			\mathcal{I}&=&\big\{(x_1,x_2,c)\in F\times F\times C: \\
			& &x_1\cap c+r_1S^1 \neq\emptyset, x_2\cap c+r_2S^1\neq\emptyset, |x_1-x_2|\subset \left(r_2-r_1+\gamma,r_2+r_1-\gamma\right)\big\}.
		\end{eqnarray*}
		
		We now count $\#\mathcal{I}$ in two different ways. First, there are at most $\#\mathcal{F}^2$ many elements in $\mathcal{F}\times \mathcal{F}$. For any pair $(x_1,x_2)\in \mathcal{F}\times \mathcal{F}$ such that $|x_1-x_2|\subset (r_2-r_1+\gamma,r_2+r_1-\gamma)$, it is not hard to see that there are not many $c\in C$ such that
		\[
		x_1\cap c+r_1S^1 \neq\emptyset, x_2\cap c+r_2S^1\neq\emptyset.
		\]
		To be precise, consider $T_{r_1}^{3\delta}(x_1), T_{r_2}^{3\delta}(x_2)$ to be the annulus cocentred with $x_1, x_2$ respectively. The inner and outer radii of $T_{r_i}^{3\delta}(x_i)$ are $r_i-1.5\delta, r_i+1.5\delta$. Then we see that
		\[
		\{c\in C: x_1\cap c+r_1S^1 \neq\emptyset, x_2\cap c+r_2S^1\neq\emptyset\}\subset T_{r_1}^{3\delta}(x_1)\cap T_{r_2}^{3\delta}(x_2).
		\]
		Now observe that there is a constant $A$ which depends on $r_1,r_2,\gamma$ such that
		\[
		T_{r_1}^{3\delta}(x_1)\cap T_{r_2}^{3\delta}(x_2)
		\]
		can be covered by two squares side length $A\delta$. Because $C$ is $\delta$-separated we see that
		\[
		\{c\in C: x_1\cap c+r_1S^1 \neq\emptyset, x_2\cap c+r_2S^1\neq\emptyset\}
		\]
		contains at most $1000A^2$ many elements. Therefore we see that
		\[
		\#\mathcal{I}\leq 1000A^2\#\mathcal{F}^2.\tag{1}
		\]
		To obtain a lower bound, we consider each individual $c\in C$. By assumption for each $c\in C$ we can find at least $0.5 \#C_1(c)\#C_2(c)$ many pairs in $\mathcal{F}\times \mathcal{F}$ such that the distance set between each pair of cubes are contained in $(r_2-r_1+\gamma,r_2-r_1-\gamma)$. Therefore we see that
		\[
		\#\mathcal{I}\geq \frac{1}{2} \delta^{-\alpha-2\beta}.\tag{2}
		\]
		The final result follows from $(1),(2)$ by absorbing constants into $\gtrsim$ symbol.
	\end{proof}
	
	\section{proof of Theorem \ref{Main}}
	In the proof we will be counting covering numbers with a fixed scale $\delta\in (0,1)$. All the quantities might depend on $\delta$. We will also use $\gtrsim, \lesssim, \approx$ symbols. In this proof, we can see that there is a fixed number $M>0$ such that all the implicit constants in the approximation symbols can be chosen as $M$. For example
	\[
	f(\delta)\lesssim g(\delta) \text{ can be written as } f(\delta)\leq M g(\delta).
	\]
	Let $F_1, F_2, F_3$ be stated as in the statement of Theorem \ref{Main}. We first show the result with weak-bilinear separation condition. For all $\delta\in (0,1)$ and $\epsilon>0$  we have the following result
	\[
	\min\{N_\delta(F_1),N_\delta(F_2),N_\delta(F_3)\}\gtrsim \delta^{-\lboxd F+\epsilon}.\tag{*}
	\]
	Later we shall let $\epsilon\to 0$ but for now it is a fixed number. By applying the chessboard argument (Lemma \ref{chessboard}) we can find $100\delta$-separated subsets $C_i\subset F_i, i\in \{1,2,3\}$ such that
	\[
	\#C_i\gtrsim  \delta^{-\lboxd F+\epsilon}.
	\]
	We see that each $(c_1,c_2,c_3)\in C_1\times C_2\times C_3$ forms a triangle which is far away from being degenerate. (A precise description can be found in Definition \ref{BiSep}.) Now we can use disjoint $\delta$ cubes to cover $\Delta(F)$. Denote this set of $\delta$-cubes to be $\mathcal{N}_{\delta}(\Delta(F))$. We see that the triple $(|c_1-c_2|,|c_2-c_3|,|c_3-c_1|)$ belongs to one of the cubes in $\mathcal{N}_{\delta}(\Delta(F))$. We denote $K(c_1,c_2,c_3)$ to be this cube. For each $K\in \mathcal{N}_{\delta}(\Delta(F))$ we define the following set
	\[
	S(K)=\{(c_1,c_2,c_3)\in C_1\times C_2\times C_3:(|c_1-c_2|,|c_2-c_3|,|c_3-c_1|)\in K \}.
	\]
	Clearly we have the following relation
	\[
	\sum_{K\in \mathcal{N}_{\delta}(\Delta(F))} \#S(K)= \#C_1\#C_2\#C_3.
	\]
	Then by pigeonhole principle we see that there exists at least one $K$ such that
	\[
	\#S(K)\geq \frac{\#C_1\#C_2\#C_3}{N_\delta(\Delta(F))}\gtrsim \frac{ (\delta^{-\lboxd F+\epsilon})^3}{N_\delta(\Delta(F))}.
	\]
	Intuitively this means that we can find a triangle and we can find a lot of copies of that triangle in $F$. Now we fix this choice of cube $K$. 
	For each $c_1\in C_1$, we define the following set
	\[
	S(K,c_1)=\{(c_2,c_3)\in C_2\times C_3: (c_1,c_2,c_3)\in S(K)\}.
	\]
	Then we see that
	\[
	\sum_{c_1\in C_1} \#S(K,c_1)=\#S(K).
	\]
	By dyadic pigeonhole principle (Lemma \ref{DyPi}) we can find two integers $N_1, N_2$ such that
	\[
	N_1N_2\geq \frac{\#S(K)}{2\log \#S(K)}
	\]
	and such that there are $N_1$ many $c_1\in C_1$ with
	\[
	\#S(K,c_1)\in [N_2, 2N_2].
	\]
	Now we take a closer look at the set $S(K,c_1)$ for $c_1$ described as above. We can find many pairs $(c_1,c_2,c_3)$ in $C_2\times C_3$ such that $c_1,c_2,c_3$ is $\delta$-close to a fixed non degenerate triangle. Because of the bilinear separation condition, there exist positive constants $r_1,r_2,\gamma$ such that 
	\[
	|c_1-c_2|\in [r_1-3\delta, r_1+3\delta], |c_1-c_3|\in [r_2-3\delta, r_2+3\delta], |c_2-c_3|\in [|r_1-r_2|+10\gamma, r_1+r_2-10\gamma].
	\]  
	If we further fix $c_2$, then there exist at most a bounded number (which does not depend on $\delta$) of $c_3$ such that
	\[
	(c_2,c_3)\in S(K,c_1). 
	\]
	To summarize, around each $c_1$ we can find two (not necessary distinct) annulus of inner,outer radii in $[r_1-3\delta, r_1+3\delta], [r_2-3\delta, r_2+3\delta]$. Those annuli contains $\gtrsim \#S(K,c_1)$ many points in $C_2, C_3$ respectively. Now we can apply Lemma \ref{lmDCR} to deduce that
	\[
	N_\delta(F)\gtrsim \sqrt{N_1N^2_2}.
	\]
	We also have the following obvious bound
	\[
	N_{\delta}(F)\geq \max\{N_1, N_2\}.
	\]
	So we see that
	\[
	N_\delta(F)\gtrsim (N_1N_2)^{\frac{2}{3}}\gtrsim \frac{-1}{\log \delta} \frac{(\delta^{-\lboxd F+\epsilon})^2}{N_\delta(\Delta(F))^{2/3}}.
	\]
	Therefore we see that for all $\delta>0$
	\[
	N_\delta(\Delta(F))\gtrsim \left(\frac{-1}{\log \delta}\right)^{3/2} \frac{(\delta^{-\lboxd F+\epsilon})^3}{(N_\delta(F))^{3/2}}.
	\]
	We know that there exist arbitrarily small $\delta\in (0,1)$ such that
	\[
	N_\delta(F)\leq \delta^{-\lboxd F-\epsilon}.
	\]
	This implies that
	\[
	\uboxd \Delta(F)\geq \frac{3}{2}\lboxd F-4.5\epsilon.
	\]
	By letting $\epsilon\to 0$ we see that
	\[
	\uboxd \Delta(F)\geq \frac{3}{2} \lboxd F.
	\]
	Now we shall show the result for strong-bilinear separation condition. In this case we can replace the inequality $(*)$ with the following,
	\[
	\min\{N_\delta(F_1),N_\delta(F_2),N_\delta(F_3)\}\gtrsim N_\delta(F).
	\]
	The rest of this proof is very similar to that of the weak-bilinear separation case. We omit the full details.
	\section{Proof of Theorem \ref{coMain}, part I}
	Before we prove Theorem \ref{coMain}, let us examine an extreme case when the weak-bilinear separation condition does not hold. Suppose that $F$ is contained in a line segment and for simplicity we shall assume that $F\subset [0,1]$.  For any $\delta\in (0,1)$ we can cover $F$ with disjoint $\delta$-boxes and we need $N_{\delta}(F)$ many of them. Then we simply find $x\in F$ such that \[N_{\delta}(F\cap [0,x])=N_{\delta}(F\cap [x,1])\] and therefore for any $y,z\in F$ with $y<x<z$ we see that $(x-y,z-x)\in \Delta(F)$. It is easy to see that this gives at least $0.25 N_\delta(F)^2$ many contributions to $N_\delta(\Delta(F))$. As this holds for all $\delta\in (0,1)$ we see that $\lboxd \Delta(F)\geq 2\lboxd F.$ We will use this result later. 
	
	\begin{lma}\label{lm3}
		If $F\subset \mathbb{R}^2$ is a compact subset then $\Delta(F)$ is a compact subset of $\mathbb{R}^3.$
	\end{lma}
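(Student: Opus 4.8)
The plan is to exhibit $\Delta(F)$ as the continuous image of a compact set. Define the map $\Phi \colon (\mathbb{R}^2)^3 \to \mathbb{R}^3$ by
\[
\Phi(x,y,z) = \big(\,|x-y|,\ |z-y|,\ |x-z|\,\big).
\]
Each coordinate of $\Phi$ is the composition of the difference map $(\mathbb{R}^2)^2 \to \mathbb{R}^2$ with the Euclidean norm $\mathbb{R}^2 \to \mathbb{R}$, both of which are continuous, so $\Phi$ is continuous. By the very definition of the triangle set, a triple $(r_1,r_2,r_3)$ lies in $\Delta(F)$ precisely when $(r_1,r_2,r_3) = \Phi(x,y,z)$ for some $(x,y,z) \in F\times F\times F$; in other words $\Delta(F) = \Phi(F\times F\times F)$.

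Next I would observe that $F\times F\times F$ is compact: since $F$ is a compact subset of $\mathbb{R}^2$ it is closed and bounded, hence $F\times F\times F$ is closed and bounded in $\mathbb{R}^6$, and therefore compact. As the continuous image of a compact set is compact, $\Delta(F) = \Phi(F\times F\times F)$ is a compact subset of $\mathbb{R}^3$. This is the whole argument.

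There is no genuine obstacle here; the only points deserving a word of care are (i) that $\Delta(F)$ is exactly the image of $\Phi$, and not merely contained in it, which is immediate from the definition; and (ii) the choice of ambient space. If one insisted on viewing $\Delta(F)$ inside the open cone $\mathbb{R}_{>0}^3$, compactness could fail for degenerate configurations of $F$ (e.g.\ a single point contributes the boundary point $(0,0,0)$), so the statement is to be read with $\Delta(F)$ sitting in $\mathbb{R}^3$ (equivalently, in the closed cone), which is the convention used throughout.
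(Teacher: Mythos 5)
Your proof is correct, and it is the same underlying idea as the paper's: the paper verifies boundedness and then closedness of $\Delta(F)$ by hand via a convergent sequence $a_i=(|x_i-y_i|,|x_i-z_i|,|y_i-z_i|)$ and a subsequence extraction in $F\times F\times F$, which is exactly the sequential unpacking of your statement that $\Delta(F)=\Phi(F\times F\times F)$ is the continuous image of a compact set. Your packaging is a little cleaner, and your remark (ii) about working in $\mathbb{R}^3$ rather than the open cone $\mathbb{R}_{>0}^3$ is a sensible clarification of a small imprecision in the paper's definition of $\Delta(F)$.
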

	\begin{proof}
		It is clear that $\Delta(F)\subset\mathbb{R}^3$ is bounded. We show that $\Delta(F)$ is also closed. Let $a_i\in \Delta(F),i\geq 1$ be a sequence of points converging to $a\in\mathbb{R}^3.$ Then we can find points $(x_i,y_i,z_i)\in F\times F\times F$ such that
		\[
		a_i=(|x_i-y_i|,|x_i-z_i|,|y_i-z_i|).
		\]
		By taking a subsequence if necessary we assume that $x_i\to x, y_i\to y, z_i\to z$ for $(x,y,z)\in F\times F\times F.$ Then we see that
		\[
		|x_i-y_i|\to |x-y|,|x_i-z_i|\to |x-z|,|z_i-y_i|\to |z-y|.
		\]
		Thus we see that $a\in\Delta(F).$ This concludes the proof.
	\end{proof}
	\begin{lma}\label{lm1}
		Let $E$ be a weak tangent of $F$, then $\Delta(E)$ is a subset of a weak tangent of $\Delta(F)$. In particular we see that $\Assouad \Delta(E)\leq \Assouad \Delta(F).$
	\end{lma}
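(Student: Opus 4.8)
The plan is to unwind the definition of weak tangent and transport it through the triangle-set map. Let $E$ be a weak tangent of $F$, so there is a sequence of minisets $D_k = (c_k F + t_k)\cap X$ with $c_k \geq 1$, $t_k \in \mathbb{R}^n$, converging to $E$ in the Hausdorff metric on $\mathcal{K}(X)$. I would first observe that the triangle set behaves well under the affine maps $x \mapsto c_k x + t_k$: translation leaves all pairwise distances unchanged, while scaling by $c_k$ scales every distance by $c_k$, so $\Delta(c_k F + t_k) = c_k \Delta(F)$. The only complication is the intersection with $X$. So I would work instead with the auxiliary minisets $D'_k = c_k F + t_k$ (no truncation) and their triangle sets $\Delta(D'_k) = c_k \Delta(F)$, which are genuine rescalings of $\Delta(F)$ in $\mathbb{R}^3$, hence candidate "pre-minisets" for $\Delta(F)$.

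Next I would set up the comparison between $\Delta(D_k)$ and a bounded piece of $c_k\Delta(F)$. Since $D_k \subset D'_k$, we have $\Delta(D_k) \subset \Delta(D'_k) = c_k\Delta(F)$, and because $D_k \subset X = [0,1]^n$, every triangle spanned by $D_k$ has all side lengths at most $\sqrt{n}\,\mathrm{diam}$, so $\Delta(D_k) \subset X' := [0,\sqrt n]^3$ (or a suitably chosen fixed cube; I would just call it $X'$ and note $\Delta(D_k)\subset c_k\Delta(F)\cap X'$). By Lemma \ref{lm3}, $\Delta(F)$ is compact, hence so is each $c_k\Delta(F)\cap X'$, and these are exactly minisets of $\Delta(F)$ in the sense of the definition (with the fixed ambient cube $X'$ in place of $X$, which is harmless up to rescaling $X'$ to $[0,1]^3$). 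Passing to a subsequence, by compactness of $(\mathcal{K}(X'),d_\mathcal H)$ we may assume $c_k\Delta(F)\cap X'$ converges in the Hausdorff metric to some compact set $G \subset X'$, which is then a weak tangent of $\Delta(F)$.

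The remaining step is to show $\Delta(E) \subset G$. I would argue pointwise: take $(r_1,r_2,r_3)\in\Delta(E)$ witnessed by $u,v,w\in E$. Since $D_k \to E$, for each $k$ there are points $u_k,v_k,w_k \in D_k$ with $u_k\to u$, $v_k\to v$, $w_k\to w$; continuity of the distance functions then gives $(|u_k-v_k|,|v_k-w_k|,|w_k-u_k|) \to (r_1,r_2,r_3)$ (after matching the labelling convention of $\Delta$). Each such triple lies in $\Delta(D_k) \subset c_k\Delta(F)\cap X'$, and since $c_k\Delta(F)\cap X' \to G$ in $d_\mathcal H$, a standard Hausdorff-limit argument (the limit set contains all limits of sequences drawn from the approximating sets) yields $(r_1,r_2,r_3)\in G$. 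Hence $\Delta(E)\subset G$. Then $\Assouad\Delta(E) \leq \Assouad G \leq \Assouad\Delta(F)$, the last inequality being the weak-tangent bound for the Assouad dimension quoted from \cite{Fr,MT} in the excerpt.

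The main obstacle, and the only genuinely delicate point, is bookkeeping the truncation by $X$: one must be careful that $\Delta(D_k)$, built from the truncated miniset, still sits inside the rescaled-and-truncated triangle set $c_k\Delta(F)\cap X'$ for a \emph{single fixed} cube $X'$ independent of $k$ — this is where the fact $D_k\subset[0,1]^n$ is used, forcing all relevant distances to be bounded — and that the resulting sequence of minisets of $\Delta(F)$ lives in a fixed compact ambient space so that the Hausdorff-compactness argument applies. Everything else is the routine continuity-of-distance and Hausdorff-convergence juggling, so I would state those as one-line observations rather than belabor them.
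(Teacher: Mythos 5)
Your proposal is correct and follows essentially the same route as the paper: scale/translate commutes with the triangle-set map, $\Delta$ of a truncated miniset sits inside the corresponding truncated miniset of $\Delta(F)$, pass to a subsequence for Hausdorff convergence, and then chase limits pointwise. The one small refinement you make -- observing that the ambient cube for the triangle sets should really be $[0,\sqrt{n}]^3$ rather than $[0,1]^3$ -- is a genuine (if harmless) imprecision in the paper's own write-up, so your bookkeeping is actually slightly tighter.
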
 
	\begin{proof}
		By definition, $E$ is the limit in Hausdorff metric of sets $E_i=(r_i E+b_i )\cap [0,1]^2$ where
		\[
		b_i\in\mathbb{R}^2, r_i>0, \lim_{i\to\infty} r_i=\infty.
		\]
		Now we can take the sequence $\Delta(E_i)$ and it is easy to see that
		\[
		\Delta(E_i)\subset r_i\Delta(F)\cap [0,1]^3.
		\]
		By taking a subsequence of the sets $E_i$ if necessary we can assume that
		\[
		E_i, \Delta(E_i),  r_i\Delta(F)\cap [0,1]^3
		\]
		all converge as $i\to\infty$ with respect to the Hausdorff metric. Now fix a positive number $\epsilon>0$ and suppose that $(a,b,c)\in \Delta(E)$. Then for all large enough $i$ we can find three points $(a_i,b_i,c_i)$ in $E_i$ such that
		\[
		\max\{|a_i-a|, |b_i-b|, |c_i-c|\}<\epsilon.
		\]
		Then we see that
		\[
		|a_i-b_i|\leq |a_i-a|+|a-b|+|b-b_i|<|a-b|+2\epsilon
		\]
		\[
		|a-b|\leq |a-a_i|+|a_i-b_i|+|b_i-b|<|a_i-b_i|+2\epsilon.
		\]
		Similar relations hold for $|b-c|, |b_i-c_i|$ and $|a-c|,|a_i-c_i|$ as well. We see that 
		\[
		\Delta(E)\subset\lim_{i\to\infty} \Delta(E_i)\subset \lim_{i\to\infty}(r_i\Delta(F)\cap [0,1]^3).
		\]
		This is what we want.
	\end{proof}
	
	Because of Lemma \ref{lm2}, Lemma \ref{lm3} and Lemma \ref{lm1}, by taking a weak tangent if necessary, we can assume that $F$ has equal Hausdorff dimension and Assouad dimension, say,  $s>0$ and $\mathcal{H}^s(F)>0.$ We shall try to find an integer $k>0$ and three disjoint dyadic cubes
	\[
	c_1,c_2,c_3\in \mathcal{D}_k
	\]
	such that for $i\in\{1,2,3\}$
	\[
	\lboxd (c_i\cap F)= s.
	\]
	In what follows for each $i$ we write $F_i$ as $c_i\cap F$. Furthermore any triple $(x,y,z)\in F_1\times F_2\times F_3$ form a triangle that is far away from being degenerate, namely, there is a constant $r>0$ such that for all such triples
	\[
	|\langle x-y, y-z\rangle|>r.
	\] 
	If we can find such dyadic cubes then $F_1\cup F_2\cup F_3$ is weak-bilinearly separated and by Theorem \ref{Main} we see that
	\[
	\Assouad \Delta(F)\geq\uboxd \Delta(F)\geq \uboxd \Delta(F_1\cup F_2\cup F_3)\geq \frac{3}{2}\lboxd (F_1\cup F_2 \cup F_3)\geq \frac{3}{2}s.
	\]
	
	Now we are going to find those dyadic cubes. We shall see that it is always possible to find such cubes unless $F$ is essentially contained in a line in a precise sense. First we want to find $k_1$ such that at least two non-adjacent cubes in $\mathcal{D}_{k_1}$ whose intersections with $F$ have positive $\mathcal{H}^s$ measures. If such $k_1$ does not exist, then we see that $\mathcal{H}^s$ has singleton support and this is not possible. This contradiction gives us the existence of $k_1$. Then we can find  two non-adjacent cubes $A_1,A_2$ whose intersections with $F $ has positive $\mathcal{H}^s$ measures. Then for any $k>k_1$ we can find $A^{k}_1,A^{k}_2\in\mathcal{D}_k$ with $A^{k}_1\subset A_1, A^{k}_2\subset A_2$. Then we define the following ``line" set
	\[
	L(A^{k}_1,A^{k}_2)=\bigcup^{l\cap A^k_1\neq\emptyset,l\cap A^k_2\neq\emptyset}_{l \text{ is a line}}l\cap F.
	\]
	Since $F$ is bounded, we see that as $k\to\infty$, $L(A^{k}_1,A^{k}_2)$ converges to a line segment $L$ with respect to the Hausdorff metric. Now if for any $k>k_1$ we can find $A^{k}_3\in\mathcal{D}_k$ whose intersection with $F$ has positive $\mathcal{H}^s$ measure such that 
	\[
	2A^{k}_3\cap L(A^{k}_1,A^{k}_2)=\emptyset,
	\]
	then we can choose $c_i=A^{k}_i$ for $i\in\{1,2,3\}$ and we are done. Otherwise we see that $\mathcal{H}^s$ is supported in a $3\times 2^{-k}$ neighbourhood of $L$ for all $k>k_1$ and therefore $\mathcal{H}^s$ is supported in $L$. Therefore we can actually focus on $F\cap L$ and in this case we saw that $s\leq 1$ and \[\Assouad \Delta(F)\geq \lboxd \Delta(F)\geq 2 \lboxd F\geq 2\Haus F=2\Assouad F.\] 
	In the right most inequality we have used the assumption that $\Haus F=\Assouad F.$ This shows that
	\[
	\Assouad \Delta(F)\geq \frac{3}{2}\Assouad F.\tag{\%}
	\]
	Now we assume that $\Assouad F=s>1$ and in this case because of Lemma \ref{lm2}, Lemma \ref{lm3} and Lemma \ref{lm1} as before we can assume that $\Haus F=\Assouad F=s>1$ and $\mathcal{H}^s(F)>0$. Then we see that $F$ is weakly-bilinearly separated in this case because $s>1.$ As above, we can find subsets $F_1,F_2,F_3\subset F$ with positive $\mathcal{H}^s$ measures. Since $s>1$ and $\Haus F_1\leq \Assouad F=s$ we see that $\Haus F_1=\Assouad F_1$ and the same relation holds for $F_2,F_3$ as well. By Theorem \ref{S} we see that there exists $x\in F_1$ such that $\Haus D_x(F_2)=1.$ Then we see that
	\[
	\lboxd D_x(F_2)=1.
	\]
	For each $\epsilon>0,$ for all small enough number $\delta>0$ we see that \[N_\delta(D_x(F_2))\geq \delta^{-1+\epsilon}.\tag{\#}\] 
	Now we choose $l\in D_x(F_2)$ and $y\in F_2$ such that $|x-y|=l.$ Because of the construction of $F_1,F_2,F_3$ we can assume that $l\geq c$ for a constant $c>0$ which does not depend on $\delta.$ Therefore we see that there exists constant $M>0$ such that for each pair of two points $z,z'\in F_3$ with $|z-z'|\geq M\delta$, the triangle spanned by $xyz$ and the triangle spanned by $xyz'$ separate each other by at least $\delta$ when regarded as points in $\mathbb{R}^3.$ Since $\Haus F_3=\Assouad F_3=s$ we see that $\lboxd F_3=s$ and for all small enough $\delta$ we have $N_\delta(F_3)\geq \delta^{-s+\epsilon}.$ Then together with $(\#)$ we see that for all small enough $\delta>0,$
	\[
	N_\delta(\Delta(F_1\cup F_2\cup F_3))\geq \delta^{-1-s+2\epsilon}.
	\]
	This implies that $\lboxd \Delta(F)\geq 1+s-2\epsilon.$ Since $\epsilon>0$ can be chosen arbitrarily small we see that
	\[
	\Assouad \Delta(F)\geq \lboxd \Delta(F)\geq 1+s.
	\]
	The above result holds for $\Assouad F=s>1$ and together with $(\%)$ we see that the first two conclusions of Theorem \ref{coMain} concludes. 
	\section{Proof of Theorem \ref{coMain}, part II}\label{HAR}
	In this section, we closely follow \cite{GILP15}. At this state, we are not aiming at self-containing. The reader is strongly recommended to read \cite{GILP15} and convince himself/herself that the result we are going to prove `naturally follows'. In fact, a fairly large part of the main proof in this section shares arguments with \cite{GILP15}.
	
	As we use some different notations than in \cite{GILP15}, we reintroduce some definitions in \cite{GILP15}.
	
	\begin{defn}
		Let $n\geq 2$ and $2\leq k\leq n+1$ be integers. Given a set $F\subset\mathbb{R}^n$, define 
		\[
		\Delta_k(F)=\{(r_{ij},1\leq i<j\leq k)\in\mathbb{R}^{k(k-1)/2}: x_1,\dots,x_k\in F, |x_i-x_j|=r_{ij}, 1\leq i<j\leq k  \}.
		\]
	\end{defn}
	Notice that $\Delta_3$ has the same meaning as $\Delta$ we have dealt with. Taking permutations of vertices into account, $\Delta_k(F)$ counts a particular simplex at least once and at most $c(k)$ times for an integer $c(k)$ depending only on $k.$ For $k=3$ we know that $c(3)=6.$ 
	\begin{defn}\label{Def1}
		Let $F\subset\mathbb{R}^n$ be a compact set and let $\mu$ be a probability measure supported on $F$. For $g\in\mathbb{O}(n)$, the orthogonal group on $\mathbb{R}^n$, we construct a measure $\nu_g$ as follows,
		\[
		\int_{\mathbb{R}^n} f(z)d\nu_g(z)=\int_F\int_F f(u-gv)d\mu(u)d\mu(v),f\in C_0(\mathbb{R}^n).
		\]
		In other words, $\nu_g=\mu*g\mu,$ where $g\mu$ is the pushed forward measure of $\mu$ under the map $g.$ We also construct a measure $\nu$ on $\Delta_k(F)\subset\mathbb{R}^{k(k-1)/2}$ by
		\[
		\int f(t)d\nu(t)=\int f(|x_1-x_2|,\dots,|x_i-x_j|,\dots,|x_{k-1}-x_k|)d\mu(x_1)\dots d\mu(x_k), f\in C_0(\mathbb{R}^{k(k-1)/2}),
		\]
		where $t$ is a $k(k-1)/2$-vector with entries $|x_i-x_j|$ for $1\leq i<j\leq k.$
	\end{defn}
	For a given set $F\subset\mathbb{R}^n$ we can choose $\mu$ supported on $F$ with some regularities, for example, a Frostman measure. Then with these regularities we are able to obtain some results of $\nu.$ Since $\nu$ supports on $\Delta_k(F)$ we can get some informations for $\Delta_k(F)$. Thus the difference between $\Delta_k(F)$ in the above definition and $T_k(E)$ in \cite[Definition 1,1]{GILP15} is that we count the same simplex multiple times due to permutations of its vertices. For example, a triangle $\Delta ABC$ would count differently than $\Delta BAC$ in our triangle set counting, but they are actually the same triangle. A bit of caution should be given here. We frequently use $B_\delta(x)$ for the (closed) $\delta$-ball around $x$ in a metric space. We will encounter the situation where we uses $B_{\delta}(x), B_\delta(z)$ in the same expression but $x,z$ are in different spaces. We hope no confusion will rise here as the closed balls should be in the same space as their centres.
	
	\begin{thm}\label{HAR1}
		Let $F\subset\mathbb{R}^n$ be a compact set with $\Haus F=s>n/2.$ Then we have
		\[
		\Haus \Delta(F)\geq \min\{2s+\gamma_s-2n+3,3\},
		\]
		where $\gamma_s=(n+2s-2)/4$ if $s\in [n/2,(n+2)/2]$ and $\gamma_s=s-1$ if $s\geq (n+2)/2.$ 
	\end{thm}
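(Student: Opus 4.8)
The plan is to manufacture a Frostman measure directly on $\Delta(F)$, following \cite{GILP15} step by step, and to read off the dimension bound from an energy estimate for that measure instead of from the positive-Lebesgue-measure conclusion of \cite[Theorem 1.5]{GILP15}. I would fix an arbitrary $t<s$ and, by Frostman's lemma, choose a probability measure $\mu$ on $F$ with finite $t$-energy $I_t(\mu)<\infty$; let $\nu$ be the measure on $\Delta(F)\subset\mathbb{R}^3$ attached to $\mu$ as in Definition \ref{Def1}. Since $t>n/2\geq 1$, $\mu$ charges no line, so $\mu^3$ gives full mass to non-collinear triples; hence, for a suitable choice of $0<a<b$ and a small $\theta_0>0$, the restriction $\nu'$ of $\nu$ to the compact family $K$ of triangles with all edge lengths in $[a,b]$ and all angles at least $\theta_0$ is still a nonzero finite measure carried by $\Delta(F)$. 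It then suffices to show that $\nu'$ has finite $\alpha$-energy for every $\alpha<\min\{2t+\gamma_t-2n+3,\,3\}$: this forces $\Haus\Delta(F)\geq\alpha$, and letting $t\uparrow s$ (using continuity of $t\mapsto\gamma_t$) gives the theorem. (The assertion is vacuous when that minimum is $\leq 0$.)

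I would route the energy bound through a frequency-truncated $L^2$ estimate for $\nu'$. The quantitative heart of the argument --- which is exactly what the machinery of \cite{GILP15} delivers --- is the inequality
\[
\int_{|\xi|\leq R}|\widehat{\nu'}(\xi)|^2\,d\xi\ \lesssim_t\ R^{\max\{0,\ 2n-2t-\gamma_t\}}\qquad(R\geq 1).
\]
When $2t+\gamma_t\geq 2n$ this reads simply $\nu'\in L^2(\mathbb{R}^3)$, i.e. $\Delta(F)$ has positive Lebesgue measure (when $n=2$ this is Theorem \ref{GIth}), recovering \cite[Theorem 1.5]{GILP15}. Granting the displayed bound, a routine dyadic summation applied to the Riesz energy $I_\alpha(\nu')=c_\alpha\int_{\mathbb{R}^3}|\xi|^{\alpha-3}|\widehat{\nu'}(\xi)|^2\,d\xi$ (valid for $0<\alpha<3$) shows it is finite precisely when $\alpha<\min\{2t+\gamma_t-2n+3,\,3\}$, as required.

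The displayed $L^2$ bound is the bulk of the work and the step I expect to be the main obstacle. Writing $\nu'*\phi_{1/R}$ for a mollification by a fixed Schwartz function whose Fourier transform is supported in the unit ball and is $\geq 1$ on the ball of radius $1/2$, the left side above is comparable to $\|\nu'*\phi_{1/R}\|_{L^2(\mathbb{R}^3)}^2$, which one expands so that it becomes $R^3$ times the $\mu^6$-measure of the set of pairs of triples $(x_1,x_2,x_3),(y_1,y_2,y_3)\in F^3$ lying over $K$ whose three corresponding edge lengths agree up to $1/R$. I would estimate this by a nested integration that begins with the vertex $y_3$: with $x$ and $(y_1,y_2)$ frozen, $y_3$ is confined to the $(1/R)$-neighbourhood of the transverse intersection of the two spheres $y_1+|x_1-x_3|S^{n-1}$ and $y_2+|x_2-x_3|S^{n-1}$, where the restriction to the uniformly non-degenerate family $K$ supplies the transversality (equivalently, the non-degeneracy of the incidence relation behind the ``attach the apex'' operator). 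Estimating $\int d\mu(y_3)$ against this intersection --- via the $L^2$ restriction/extension inequality for spheres together with the Wolff--Erdo\u{g}an spherical-average bound $\int_{S^{n-1}}|\widehat{\mu}(\rho\omega)|^2\,d\sigma(\omega)\lesssim_t\rho^{-\gamma_t}$ for the Frostman measure $\mu$ --- produces a gain of $\gamma_t$ powers of the relevant frequency; this is the precise place at which $\gamma_t$, hence the case distinction in the statement, enters. The residual integration in $(y_1,y_2)$, which carries only the single constraint $|y_1-y_2|\approx|x_1-x_2|$, and the final integration in $x$, are absorbed by further $L^2$ estimates of the same spherical-average / distance-measure type (see also \cite{Ma1}); keeping track of the powers of $R$ then reproduces the exponent in the display. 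Equivalently, and closer to \cite{GILP15}, one presents $\nu'$ as a composition of two generalized Radon transforms --- ``attach the apex at prescribed distances'', then ``record the base edge'' --- and feeds their $L^2$-to-Sobolev mapping properties into that scheme.

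The delicate points in this last step are exactly the ones handled in \cite{GILP15}: verifying the requisite curvature / non-degeneracy of the incidence relations on the uniformly non-degenerate family $K$, so that the stationary-phase gains governed by $\gamma_t$ are legitimately available, and keeping all implied constants uniform as $t\uparrow s$. A secondary and routine matter is the removal of degenerate triangles: flat or coincident configurations contribute points of $\mathbb{R}^3$ lying on the three planes $\{r_i=r_j+r_k\}$, a set of Hausdorff dimension at most $2$, so --- since $s>1$ already guarantees that $F$ contains genuinely non-collinear triples, whence $\nu'\neq 0$ --- discarding them is harmless for a lower bound that never exceeds $3$. In short, the only new ingredient relative to \cite{GILP15} is to stop at an energy estimate for $\nu'$ rather than at the statement $\nu'\in L^2$; the remaining analytic details are precisely those of \cite{GILP15}, which is why I would only sketch them here.
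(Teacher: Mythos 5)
Your overall plan is exactly the one the paper executes: fix $t<\Haus F$, build the triangle measure $\nu$ from a Frostman measure $\mu$ as in Definition~\ref{Def1}, prove the frequency-truncated $L^{2}$ bound
\[
\int_{|\xi|\le R}|\widehat{\nu}(\xi)|^{2}\,d\xi\ \lesssim_{t}\ R^{\max\{0,\,2n-2t-\gamma_{t}\}},
\]
and then read off the energy/dimension bound by a dyadic summation against $|\xi|^{\alpha-3}$ before letting $t\uparrow s$. That truncated estimate, the Riesz-energy reduction, and the use of the Wolff--Erd\u{o}gan spherical-average bound as the source of the $\gamma_{t}$ gain are all present in the paper's argument in the same roles.

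Where you diverge is in how you organize the proof of the truncated $L^{2}$ bound. You restrict $\nu$ to a compact family $K$ of uniformly non-degenerate triangles (to secure transversality for a literal sphere-intersection count of the apex $y_3$), and then sketch an incidence/nested-integration estimate over $\mu^{6}$. The paper does not pass to such a subfamily: it stays strictly inside the group-theoretic framework of \cite{GILP15}, writing the coincidence count as an integral over $g\in\mathbb{O}(n)$ of quantities built from $\nu_g=\mu*g\mu$, freezing vertices to reduce to $\int \nu_g^{\,k-1}(B_{C\delta}(z))\,d\nu_g(z)$, and then running a Littlewood--Paley decomposition of $\nu_g$ with the spherical-average bound supplying $\|\nu_{g,j}\|_\infty \lesssim 2^{j(n-s)}$ and $\int\!\int|\nu_{g,j}|^2 \lesssim 2^{j(n-s-\gamma_s)}$. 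That Fourier-analytic route is deliberately a bit lossy at the ``freeze the other vertices'' step (it uses only one of the two sphere constraints on the apex), which is exactly what lets it bypass any explicit transversality discussion and avoid the restriction to $K$. So the restriction to $K$ is a harmless extra device in your route, but not one the paper needs. The net conclusion and the key exponent are the same, and both approaches defer the technical core to \cite[Section 3]{GILP15}, so the proposal is sound; just be aware that if you do pursue the geometric sphere-intersection picture literally rather than the group-theoretic/Littlewood--Paley one, the non-degeneracy cutoff becomes essential and the bookkeeping of powers of $R$ (your $R^3$ prefactor versus the paper's $\delta^{-n(k-1)}$ together with the $dg$-integral) has to be reconciled carefully.
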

	In particular, when $n=2$ we see that
	\[
	\Haus \Delta(F)\geq 2.5s-1.
	\]
	Then the third conclusion of Theorem \ref{coMain} follows by the above Theorem and the weak tangent trick introduced in the previous section. When $s$ is large enough so that $2s+\gamma_s-2n+3>3$ then $\Delta(F)$ has positive measure, this was shown in \cite{GILP15}.
	\begin{proof}[Proof of Theorem \ref{HAR1}]
		We need to choose cutoff functions in various places. Unless otherwise mentioned, all cutoff functions are assumed to be real valued and radial. Let $\phi(.)$ be a function taking arguments in $\mathbb{R}^l$ with $l\geq 1,l\in\mathbb{N}.$ We say that $\phi$ is radial if $\phi(x)=\phi(y)$ whenever $|x|=|y|.$ A particular reason for requiring the radial property is that Fourier transforms of a real valued radial functions are also real valued and radial. Let $\mu$ be an $s$-Frostman measure supported on $F$, $s$ can be arbitrarily chosen as long as $s<\Haus F.$ Then we construct measures $\nu_g,g\in\mathbb{O}(n), \nu$ as in Definition \ref{Def1}. To start with,  we choose a Schwartz function $\phi\in \mathcal{S}(\mathbb{R}^{k(k-1)/2})$ bounded by $1.5$ whose support is contained in $B_1(0)$ and equal to $1.5$ on $B_{1/2}(0)$. We also require that $\|\phi\|_1=1.$ For any positive number $\delta>0,$ let $\phi_\delta(.)=\delta^{-k(k-1)/2}\phi(./\delta).$ Let $\nu_\delta=\phi_\delta*\nu$. We know that $\nu_\delta\to \nu$ as $\delta\to 0$ in the weak-* sense. By the argument in \cite[Section 2]{GILP15} we see that
		\begin{eqnarray*}
			& &\int \nu^2_\delta(z)dz\lesssim \\
			& &\delta^{-n(k-1)} \int\mu^{2k}\{(x_1,\dots,x_k,y_1,\dots,y_k)\in\mathbb{R}^{2kn}: |(x_i-gy_i)-(x_j-gy_j)|\leq \delta,1\leq i<j\leq k\}dg,
		\end{eqnarray*}
		where $dg$ is the Haar measure on $\mathbb{O}(n).$
		Now we claim that for each $g\in\mathbb{O}(n),$
		\[
		\mu^{2k}\{(x_1,\dots,x_k,y_1,\dots,y_k)\in\mathbb{R}^{2kn}: |(x_i-gy_i)-(x_j-gy_j)|\leq \delta,1\leq i<j\leq k\}\leq \int \nu^{k-1}_g(B_{2\delta}(z))d\nu_g(z).
		\]
		To see this, let $x_1,\dots,x_{k-1},y_1,\dots,y_{k-1}$ be given, consider the following section (we omit the coordinates $x_1,\dots,x_{k-1},y_1,\dots,y_{k-1}$ as they are fixed)
		\[
		\{(x_k,y_k): |(x_i-gy_i)-(x_j-gy_j)|\leq \delta,1\leq i<j\leq k\}
		\]
		it is easy to see that the above section is contained in
		\[
		E=\{(x_k,y_k): |(x_k-gy_k)-(x_1-gy_1)|\leq \delta\}.
		\]
		Then we see that the $\mu^{2k}$ measure is now bounded from above by
		\[
		\mu^{2(k-1)}\{(x_1,\dots,x_{k-1},y_1,\dots,y_{k-1})\in\mathbb{R}^{2(k-1)n}: |(x_i-gy_i)-(x_j-gy_j)|\leq \delta,1\leq i<j\leq k-1\}\times
		\]
		\[
		\int 1_E(x_k,y_k)d\mu(x_k)d\mu(y_k).
		\]
		We see that $1_E(x_k,y_k)=f(x_k-gy_k)$ for $f:z\in\mathbb{R}^n\to f(z)=1_{\{a:|a-(x_1-gy_1)|\leq \delta\}}(z).$ Then by the definition of $\nu_g$ we see that
		\[
		\int 1_E(x_k,y_k)d\mu(x_k)d\mu(y_k)\leq\nu_g(B_{2\delta}(x_1-gy_1)).
		\]
		In fact if $\nu_g$ does not give positive measure on any spheres then $\nu_g(B_{2.5\delta}(.))$ would be continuous with compact support and we would get
		\[
	\int 1_E(x_k,y_k)d\mu(x_k)d\mu(y_k)=\nu_g(B_{\delta}(x_1-gy_1)).
	\]
		However, we do not assume this continuity of $\nu_g$ and we only have an upper bound by choosing a real valued function in $C_0(\mathbb{R}^n)$ which is bounded from above by one, equal to one on $B_{\delta}(x_1-gy_1)$ and vanishes outside $B_{2\delta}(x_1-gy_1).$ We can do the above step $k-1$ times and as a result we see that for each fixed $x_1,y_1,$ the section (we omit the coordinates $x_1,y_1$ as they are fixed)
		\[
		\{(x_2,\dots,x_k,y_2,\dots,y_k)\in\mathbb{R}^{2kn}: |(x_i-gy_i)-(x_j-gy_j)|\leq \delta,1\leq i<j\leq k\}
		\]
		has $\mu^{2k-2}$ measure at most
		\[
		\nu^{k-1}_g(B_{2\delta}(x_1-gy_1)).
		\]
		By Fubini, we see that 
		\[
		\mu^{2k}\{(x_1,\dots,x_k,y_1,\dots,y_k)\in\mathbb{R}^{2kn}: |(x_i-gy_i)-(x_j-gy_j)|\leq \delta,1\leq i<j\leq k\}
		\]
		\[
		\leq \int \nu^{k-1}_g(B_{2\delta}(x_1-gy_1))d\mu(x_1)d\mu(y_1)\leq\int \nu^{k-1}_g(B_{2.5\delta}(z))d\nu_g(z).
		\]
		If $\nu_g(B_{2\delta}(.))$ would be continuous then we would have
		\[ \int \nu^{k-1}_g(B_{2\delta}(x_1-gy_1))d\mu(x_1)d\mu(y_1)=\int \nu^{k-1}_g(B_{2\delta}(z))d\nu_g(z).
		\]
		In general, we choose a continuous function sandwiched by $\nu_g(B_{2\delta}(.))$ and $\nu_g(B_{2.5\delta}(.))$ (by taking a convolution of a suitable smooth cutoff function with $\nu_g$) then apply the definition of $\nu_g$ to arrive at the above inequality. Thus we have shown the following estimate,
		\[
		\int \nu^2_\delta(z)dz\lesssim \delta^{-n(k-1)}\int\int \nu^{k-1}_g(B_{2.5\delta}(z))d\nu_g(z)dg.
		\]
	If $\nu_g$ would be absolutely continuous with respect to the Lebesgue measure for almost all $g\in\mathbb{O}(n),$ then the RHS above could be replaced by
	\[
	\int\int \nu^{k}_g(z)dzdg.
	\]
	Then we would arrive at the same situation as in \cite{GILP15}. In general, we do not have this continuity at hand. To deal with this issue, let $\phi^{DD}(.)$ be a radial Schwartz function such that $\hat{\phi}^{DD}$ is non-negative,  vanishes outside the ball of radius $0.5c''>0$ around the origin and is equal to a positive number $c>0$ on a ball of radius $c'>0$ around the origin. Now we take the square $\phi^{D}=(\phi^{DD})^2$ and see that
		\[
		\hat{\phi}^{D}=\hat{\phi}^{DD}*\hat{\phi}^{DD}.
		\]
		We see that $\hat{\phi}^{D}$ is non-negative, vanishes outside the ball of radius $c''$ around the origin. Unlike $\hat{\phi}^{DD}$, $\hat{\phi}^{D}$ is no longer a constant function on any ball centred at the origin. However there is a number $c'''>0$ such that for each $\omega$ inside $B_{c'''}(0)$, $\hat{\phi}^{D}(\omega)$ is greater than $0.5\hat{\phi}^{D}(0)>0$ and less than $2\hat{\phi}^{D}(0).$ By rescaling, we may assume that $\phi^D(x)\geq 1$ for $x\in B_{2.5}(0).$ This can be done because $\phi^D$ is real valued, Schwartz and $\phi^D(0)>0.$ Since $\hat{\phi}^D$ is compactly supported, we can denote $c''''=\|\hat{\phi}^D\|_\infty.$ We write $h_{g,\delta}=\nu_g*\phi^D(\delta^{-1}.).$ We see that 
		\[
		\nu_g(B_{2.5\delta}(z))=\int_{B_{2.5\delta}(z)}d\nu_g(x)\leq \int \phi^D((z-x)/\delta)d\nu_g(x)=h_{g,\delta}(z).
		\]
	Now we write $f_{g,\delta}(.)=\delta^{-n}h_{g,\delta}(.),$ as a result we see that
		\[
		\int \nu^2_\delta(z)dz\lesssim \int\int f^{k-1}_{g,\delta}(z)d\nu_g(z)dg.
		\]
		Let $\psi$ be a smooth real valued cutoff function supported in $\{\omega\in\mathbb{R}^n: |\omega|\in [0.5,4] \}$ and identically equal to $1$ in $\{\omega\in\mathbb{R}^n: |\omega|\in [1,2] \}$ and bounded from above by $1.$ Let $f_{g,\delta,j}, \nu_{g,j}$ be the $j$-th Littlewood-Paley piece of $f_{g,\delta},\nu_{g}$ respectively, namely, $\hat{f}_{g,\delta,j}(\omega)=\hat{f}_{g,\delta}(\omega)\psi(2^{-j}\omega)$ and similarly for $\nu_{g,j}.$ The rest of the argument is essentially the same as in \cite[Section 3 ]{GILP15}. We need to bound $\|f_{g,\delta,j}\|_{\infty}$ as well as $\|\nu_{g,j}\|_{\infty}.$ The later can be bounded by
		\[
		C2^{j(n-s)}
		\]
		for any $s<\Haus F$ with a constant $C$ depending on the function $\psi$. This was shown in \cite[page 805]{GILP15}. For the former, we will be interested in estimating $\|f_{g,\delta,j}\|_\infty$ when $2^j$ is not as large as $\delta^{-1}.$ In this case, recall that $f_{g,\delta}=\nu_g*\phi^D_\delta$ and in terms of Fourier transform we have
		\[
		\hat{f}_{g,\delta,j}=\hat{\nu}_g \hat{\phi^D_\delta} \psi(2^{-j}.)
		\]
			Recall that $\phi^D_\delta(.)=\delta^{-n}\phi^D(./\delta)$, therefore we have $\hat{\phi^D_\delta}(.)=\hat{\phi^D}(\delta.).$ Then we see that 
		\begin{eqnarray*}
		\|f_{g,\delta,j}\|_\infty&\leq& \|\hat{f}_{g,\delta,j}\|_1\\
		&\leq& c'''' \int |\hat{\nu}_g(\omega)  \psi(2^{-j}\omega)|d\omega\\
		&\leq& c''''\int_{B_{2^{j+2}}(0)} |\hat{\mu}(\omega)\hat{\mu}(g\omega)|d\omega\\
		&\leq& c''''\sqrt{\int_{B_{2^{j+2}}(0)} |\hat{\mu}(\omega)|^2d\omega \int_{B_{2^{j+2}}(0)} |\hat{\mu}(g\omega)|^2d\omega}.
		\end{eqnarray*}
		By the discussion in \cite[Section 3.8]{Ma2} we see that
		\[
		\int_{B_{2^{j+1}}(0)} |\hat{\mu}(\omega)|^2d\omega\lesssim 2^{(j+2)(n-s)}.
		\]
		The same estimate holds for $\int_{B_{2^{j+2}}(0)} |\hat{\mu}(g\omega)|^2d\omega$ as well. Therefore we see that
		\[
			\|f_{g,\delta,j}\|_\infty\leq C'2^{j(n-s)}
		\]
		where $C'>0$ is a constant which does not depend on $g,j,\delta.$ Observe that if $2^{j-1}>c''\delta^{-1}$ then $f_{g,\delta,j}=0$ and this is the reason for considering $2^j$ to be not much larger than $\delta^{-1}.$ By \cite[Formula (3.27)]{Ma2}, we have
		\[
		\int f^{k-1}_{g,\delta}(z)d\nu_g(z)=\int \hat{\nu}_g(\omega)\hat{f}_{g,\delta}*\dots*\hat{f}_{g,\delta}(-\omega)d\omega.\tag{*}
		\]
		We can apply the exactly the same argument in \cite[Section 3]{GILP15}. As a result we see that
		\[
		\int f^{k-1}_{g,\delta}(z)d\nu_g(z)\lesssim \sum_{j}  2^{j(n-s)(k-2)}\int |f_{g,\delta,j}(x)\nu_{g,j}(x)|dx.
		\] 
		By Cauchy-Schwartz we see that
		\[
	\int |f_{g,\delta,j}(x)\nu_{g,j}(x)|dx\leq \sqrt{\int |f_{g,\delta,j}(x)|^2dx\int |\nu_{g,j}(x)|^2dx}.
		\]
		By Plancherel's formula we see that
		\[
		\int |f_{g,\delta,j}(x)|^2dx=\int |\hat{f}_{g,\delta,j}(\omega)|^2d\omega=\int |\hat{\nu}_{g,j}(\omega)|^2 |\hat{\phi}^D_\delta(\omega)|^2d\omega\leq (c'''')^2\int |\hat{\nu}_{g,j}(\omega)|^2d\omega.
		\]
	However, if $2^{j-1}\geq c''\delta^{-1}$ we see that $\hat{\phi}^D_\delta(\omega)=0$ whenever $|\omega|\in [2^{j-1},2^{j+2}].$ Thus in this case we see that
	\[
		\int |f_{g,\delta,j}(x)|^2dx=\int |\hat{\nu}_{g,j}(\omega)|^2 |\hat{\phi}^D_\delta(\omega)|^2d\omega=0.
	\] 
    Therefore we see that for $2^{j-1}\leq c''\delta^{-1},$
	\[
	\int |f_{g,\delta,j}(x)\nu_{g,j}(x)|dx\lesssim \int |\nu_{g,j}(x)|^2dx.
	\]	
	Then by integrating the above inequality against $dg$ we see that
		\[
		\int \nu^2_\delta(z)dz\lesssim \int\int f^{k-1}_{g,\delta}(z)d\nu_g(z)dg\lesssim \sum_{j: 2^{j-1}\leq c''\delta^{-1}} 2^{j(n-s)(k-2)}\int\int |\nu_{g,j}(x)|^2dxdg.
		\]
		Notice hat $\nu_{g,j}$ is real valued. By Plancherel's formula we have
		\[
		\int\int |\nu_{g,j}(x)|^2dxdg=\int \int |\hat{\nu}_{g,j}(\omega)|^2d\omega dg=\int\int |\hat{\nu}_g(\omega)|^2|\psi(2^{-j}\omega)|^2 d\omega dg.
		\]
	Since $\nu_g=\mu*g\mu$ we see that
		\[
		\int\int |\hat{\nu}_g(\omega)|^2|\psi(2^{-j}\omega)|^2 d\omega dg=\int\int |\hat{\mu}(\omega)|^2 |\hat{\mu}(g\omega)|^2 |\psi(2^{-j}\omega)|^2d\omega dg. 
		\]
		Since $\psi$ is radial, for $t\geq 0$ we write $\psi(t)$ for the value $\psi(x)$ with an arbitrary $x$ with norm $|x|=t$. This value is well-defined. Then, up to a multiple constant, the above expression on RHS is equal to
		\[
		\int \left(\int_{S^{n-1}} |\hat{\mu}(t\tau) |^2     d\tau\right)^2 |\psi(2^{-j}t)|^2t^{n-1}dt,\tag{**}
		\]
		where $d\tau$ is the Lebesgue measure on the unit sphere. We see that $(**)$ is bounded from above by
		\[
		\int_{2^{j-1}}^{2^{j+2}} \left(\int_{S^{n-1}} |\hat{\mu}(t\tau) |^2     d\tau\right)^2 t^{n-1}dt.
		\]
		Now we have the following estimate,
		\begin{eqnarray*}
			& &\int f^{k-1}_{g,\delta}(z)d\nu_g(z)dg\\
			&\lesssim&\sum_{j: 2^{j}\leq 2c''\delta^{-1}}   2^{j(n-s)(k-2)}	\int_{2^{j-1}}^{2^{j+2}} \left(\int_{S^{n-1}} |\hat{\mu}(t\tau) |^2     d\tau\right)^2 t^{n-1}dt.
		\end{eqnarray*}
		If $k=3$ then we have (the sum of negative values of $j$ gives a constant)
		\[
		\int f^{2}_{g,\delta}(z)d\nu_g(z)dg\lesssim \sum_{j: 1\leq 2^{j}\leq 2c''\delta^{-1}}2^{j(n-s)}	\int_{2^{j-1}}^{2^{j+2}} \left(\int_{S^{n-1}} |\hat{\mu}(t\tau) |^2     d\tau\right)^2 t^{n-1}dt.
		\]
		Now we are going to use \cite[Theorem 3.1]{GILP15},
		\begin{thm}
			Let $\mu$ be a compactly supported Borel measure on $\mathbb{R}^n$. Then for $s\in (n/2,\Haus \mu),\epsilon>0,$
			\[
			\int_{S^{n-1}} |\hat{\mu}(t\tau)|^2d\tau\lesssim_{\epsilon,s} t^{\epsilon-\gamma_s},
			\]
			where $\gamma_s=(n+2s-2)/4$ if $s\in [n/2,(n+2)/2]$ and $\gamma_s=s-1$ if $s\geq (n+2)/2.$
		\end{thm}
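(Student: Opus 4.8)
The statement is the classical spherical-averages estimate for the Fourier transform of a Frostman measure, and the plan is to deduce it from three ingredients: (i) the stationary-phase asymptotics of $\widehat{\sigma_{S^{n-1}}}$, (ii) an elementary energy estimate which already settles the endpoint $s=n/2$, and (iii) the sharp $L^2$ restriction/extension theory for the sphere, which supplies the genuine improvement for $s>n/2$.

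\emph{Setup.} Write $\sigma=\sigma_{S^{n-1}}$ for the surface measure on the unit sphere and begin by recording the identity
\[
\int_{S^{n-1}}|\hat{\mu}(t\tau)|^2\,d\tau=\int\int\widehat{\sigma}\big(t(x-y)\big)\,d\mu(x)\,d\mu(y),
\]
obtained by expanding $|\hat{\mu}(t\tau)|^2=\hat{\mu}(t\tau)\overline{\hat{\mu}(t\tau)}$, applying Fubini, and using $\int_{S^{n-1}}e^{-it\tau\cdot z}\,d\tau=\widehat{\sigma}(tz)$ together with the central symmetry of the sphere. I would also record the stationary-phase expansion $\widehat{\sigma}(\xi)=|\xi|^{-(n-1)/2}\big(e^{i|\xi|}a_+(\xi)+e^{-i|\xi|}a_-(\xi)\big)$ for $|\xi|\ge 1$, where $a_\pm$ are symbols of order $0$; in particular $|\widehat{\sigma}(\xi)|\lesssim(1+|\xi|)^{-(n-1)/2}$.

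\emph{Step 1: the elementary bound.} Insert $|\widehat{\sigma}(t(x-y))|\lesssim(1+t|x-y|)^{-(n-1)/2}$ into the identity and split the double integral into $|x-y|\le t^{-1}$ and $|x-y|>t^{-1}$; on dyadic shells $|x-y|\sim 2^{-j}$ the Frostman condition $\mu(B(x,r))\lesssim r^s$ controls the $\mu$-mass, and summing over the two ranges gives
\[
\int_{S^{n-1}}|\hat{\mu}(t\tau)|^2\,d\tau\ \lesssim\ t^{-\min\{s,\,(n-1)/2\}}\,\big(\log(2+t)\big)^{O(1)}.
\]
At $s=n/2$ the exponent is $(n-1)/2=(n+2s-2)/4$, so (after the usual $\epsilon$-room) the endpoint case is done; for $s>n/2$ this is weaker than claimed and an improvement is needed.

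\emph{Step 2: the restriction-theoretic improvement.} Regularise $\mu$ at the critical scale: with $\delta=t^{-1}$ and $\varphi$ a fixed Schwartz probability density, put $\mu^\delta=\mu*\varphi_\delta$, so that $\widehat{\mu^\delta}$ coincides with $\hat{\mu}$ up to a bounded factor on $|\xi|\lesssim t$, whence $\int_{S^{n-1}}|\hat{\mu}(t\tau)|^2\,d\tau\lesssim t^{-(n-1)}\int_{tS^{n-1}}|\widehat{\mu^\delta}(\xi)|^2\,d\sigma_{tS^{n-1}}(\xi)$. Now apply the sharp $L^2$ restriction (equivalently extension) estimate for the sphere $tS^{n-1}$, and control the right-hand side through the Frostman condition via $\|\mu^\delta\|_{L^{q'}}\lesssim\|\mu^\delta\|_\infty^{1-1/q'}\|\mu^\delta\|_1^{1/q'}\lesssim t^{(n-s)/q}$, using $\|\mu*\varphi_{t^{-1}}\|_\infty\lesssim t^{n-s}$. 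The Stein--Tomas exponent alone is not enough near $s=n/2$; one needs the bilinear extension estimates --- Wolff's circular-average estimate when $n=2$ and Erd\H{o}gan's bilinear Fourier extension theorem in general $n$ --- and these are precisely what yield the exponent $\gamma_s=(n+2s-2)/4$ on $[n/2,(n+2)/2]$. For $s\ge(n+2)/2$, where the claimed exponent is $s-1$, it is cleaner to use the full asymptotic expansion of $\widehat{\sigma}$ directly: substituting it into the identity rewrites the spherical average, up to the factor $t^{-(n-1)/2}$, as an oscillatory integral $\sum_\pm\int\int|x-y|^{-(n-1)/2}e^{\pm it|x-y|}a_\pm(t(x-y))\,d\mu\,d\mu$, which one estimates by a further Plancherel-type argument exploiting that $s>(n-1)/2$ makes the $\tfrac{n-1}{2}$-energy of $\mu$ finite.

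\emph{Main obstacle.} Everything except Step 2 is soft. The actual content is the sharp restriction/extension input required to beat the trivial exponent $(n-1)/2$ once $s>n/2$: neither the elementary bound nor the Stein--Tomas estimate suffices on its own, and one genuinely needs the bilinear extension machinery (Wolff in $\mathbb{R}^2$, Erd\H{o}gan in general dimension) to reach $(n+2s-2)/4$, together with some bookkeeping to interpolate the available inputs, to match the precise algebraic form of $\gamma_s$, and to absorb the $\epsilon$-loss into the implicit constant. Consistently with the surrounding section, which follows \cite{GILP15} without aiming at self-containment, I would in the end invoke these sharp spherical-average estimates rather than reprove them.
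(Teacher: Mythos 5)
The paper gives no proof of this theorem at all: it is quoted verbatim as a citation to \cite[Theorem 3.1]{GILP15}, which is itself a restatement of the sharp spherical-average decay estimates of Mattila, Wolff ($n=2$) and Erd\H{o}gan (general $n$). Your sketch of the internal structure --- reduction to $\widehat{\sigma}$ via the standard identity, the elementary bound settling $s=n/2$, and the bilinear restriction/extension input giving $(n+2s-2)/4$ --- is accurate, and your concluding decision to invoke the known estimates rather than reprove them is exactly what the paper does.
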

		Here $\Haus \mu$ is defined to be the following value
		\[
		\sup\left\{s>0: \int\int |x-y|^{s}d\mu(x)d\mu(y)<\infty  \right \}.
		\]
		For a compact set $F$ we have $\Haus F=\sup_{\mu\in\mathcal{P}(F)}\Haus \mu.$ In our situation, we recall that $s<\Haus F$ which was mentioned in the beginning of this proof.
		
		If $2^{j-1}\leq c''\delta^{-1}$ we see that
		\[
			\int_{2^{j-1}}^{2^{j+2}} \left(\int_{S^{n-1}} |\hat{\mu}(t\tau) |^2     d\tau\right)^2 t^{n-1}dt\lesssim 2^{j(n-s-\gamma_s)}.
		\]
		This is because (see the beginning of \cite[Section 3.8]{Ma2}),
		\begin{eqnarray*}
		\int_{2^{j-1}}^{2^{j+2}}\left(\int_{S^{n-1}} |\hat{\mu}(t\tau) |^2     d\tau\right)t^{n-1}dt&\lesssim& \int_{B(0,2^{j+2})} |\hat{\mu}(\omega)|^2d\omega	\lesssim 2^{j(n-s)}.
		\end{eqnarray*}
		since $\mu$ is an $s$-Frostman measure. For the sum with small $j,$ we assume that $2n-2s-\gamma_s>0$ for otherwise $\nu$ is absolutely continuous as shown in \cite[Section 3]{GILP15}. We have
		\[
		\sum_{j:1\leq 2^j\leq 2c''\delta^{-1}}2^{j(2n-2s-\gamma_s)}\lesssim_{n,s} \delta^{-(2n-2s-\gamma_s)}.
		\]
		In all we have obtained that for $k=3$ and $2n-2s-\gamma_s>0,$
		\[
		\int f^{2}_{g,\delta}(z)d\nu_g(z)dg\lesssim \delta^{-(2n-2s-\gamma_s)}.
		\]
	Therefore we see that
		\[
		\int \nu^2_\delta(z)dz\lesssim \int f^{2}_{g,\delta}(z)d\nu_g(z)dg\lesssim\delta^{-(2n-2s-\gamma_s)}.
		\]
		The above estimate holds for each $\delta$ with a suitable constant in the symbol $\lesssim$. We claim the following estimate holds,
		\[
		\int_{B_{\delta^{-1}}(0)} |\hat{\nu}(\omega)|^2d\omega\lesssim\delta^{-(2n-2s-\gamma_s)}.
		\]
		To see this, observe that $\hat{\phi}$ is Schwartz, real valued and $\hat{\phi}(0)>0.$ Then there is a positive number $r$ such that $\hat{\phi}(\omega)\geq 0.5\hat{\phi}(0)$ whenever $|\omega|\leq r.$ Then we know that
		\[
		\int_{B_{r\delta^{-1}}(0)} |\hat{\nu}(\omega)|^2d\omega\leq (0.5\hat{\phi}(0))^{-2}\int_{B_{r\delta^{-1}}(0)} |\hat{\nu}(\omega)\hat{\phi}_\delta(\omega)|^2d\omega\leq (0.5\hat{\phi}(0))^{-2}\int |\hat{\nu}(\omega)\hat{\phi}_\delta(\omega)|^2d\omega.
		\]
		Since $\nu_\delta$ is real valued we see that
	    \[
	    \int |\hat{\nu}(\omega)\hat{\phi}_\delta(\omega)|^2d\omega=\int |\hat{\nu_\delta}(\omega)|^2 d\omega=\int \nu^2_\delta(z)dz.
	    \]
		From here the claim follows. Therefore we see that
		\[
		\int |\hat{\nu}(\omega)|^2 |\omega|^{t-3}d\omega\lesssim \int_{B_1(0)} |\omega|^{t-3}d\omega+\sum_{j\geq 0}2^{j(t-3)}\int_{|\omega|\in [2^j,2^{j+1}]} |\hat{\nu}(\omega)|^2d\omega<\infty
		\]
		whenever $0<t<3+2s+\gamma_s-2n.$ This implies that $\nu$ has finite $t$-energy if $t<3+2s+\gamma_s-2n.$ Therefore we see that
		\[
		\Haus \nu\geq 2s+\gamma_s-2n+3.
		\]
		This concludes the proof.
	\end{proof}

	\section{Further comments and problems}\label{Fur}
	A crucial point for the proof of Lemma \ref{lmDCR} is that if we fix a point in $\mathbb{R}^2$ and we want to put $n$ different triangles with the same shape and the corresponding vertex $x$ then the other two vertices trace out a subset of two concentric circles. On each of these circles, we have at least $n/2$ points. So we get $n^2/4$ many different pairs for incidence counting. This does not hold in $\mathbb{R}^3$. For example, take a $3$-simplex and put $n$ many rotated copies of this simplex around a fixed point in $\mathbb{R}^3$. Then the other $3$ points trace out three (not necessarily different) concentric spheres. However, it can happen that one of the spheres contains only $1$ point. So we can not hope to get roughly $n^3$ many triples for the incidence. Instead, it is possible to show that the number of triples is at least roughly $n^2$. The situation is more complicated for higher dimensions. We think that the following general result holds.
	
	\begin{conj}\label{con}
		Let $F\subset\mathbb{R}^n,n\geq 1$ be a compact set. Then the following result holds
		\[
		\Assouad \Delta_{n}(F)\geq \frac{n}{2}\Assouad F.
		\]
	\end{conj}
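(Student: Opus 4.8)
The plan is to mimic the proof of Theorem~\ref{coMain} (Part~I) and to reduce the conjecture to a higher-dimensional analogue of the combinatorial Lemma~\ref{lmDCR}. First I would extend the weak tangent machinery to the map $F\mapsto\Delta_n(F)$: the proof of Lemma~\ref{lm3} goes through verbatim, since an $n$-fold product of convergent sequences converges and each of the $\binom{n}{2}$ distance coordinates is continuous, so $\Delta_n(F)\subset\mathbb{R}^{n(n-1)/2}$ is compact; and the proof of Lemma~\ref{lm1} adapts, because if $E=\lim_iE_i$ with $E_i=(r_iF+b_i)\cap[0,1]^n$ then $\Delta_n(E_i)\subset r_i\Delta_n(F)\cap[0,1]^{n(n-1)/2}$ and all distance coordinates are $1$-Lipschitz, so $\Delta_n(E)$ lies in a weak tangent of $\Delta_n(F)$. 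Combining these with Lemma~\ref{lm2} we may assume $F$ is compact with $\Haus F=\Assouad F=s$ and $\mathcal{H}^s(F)>0$; then automatically $\lboxd F=\uboxd F=s$, and it suffices to prove $\lboxd\Delta_n(F)\ge\min\{\tfrac{n}{2}s,\binom{n}{2}\}$ (as in Theorem~\ref{coMain} the cap on the ambient dimension should be read into the statement).

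Next I would run the dyadic-cube dichotomy from the proof of Theorem~\ref{coMain}, inducting on $n$ (the case $n=2$ being a suitably capped statement about planar distance sets). Either there is a scale $2^{-k}$ and $n$ pairwise non-adjacent cubes $c_1,\dots,c_n\in\mathcal{D}_k$ with $\lboxd(F\cap c_i)=s$ for every $i$ such that every transversal $(x_1,\dots,x_n)\in\prod_i(F\cap c_i)$ spans an $(n-1)$-simplex of $(n-1)$-volume bounded below by a fixed constant; or else $\mathcal{H}^s$ is supported on a proper affine subspace $V\subsetneq\mathbb{R}^n$, which forces $s\le n-1$ and reduces matters either to the full-simplex problem in one lower dimension (for $n=3$ this is precisely Theorem~\ref{coMain}) or to a direct splitting argument along $V$ of the kind that yields $\lboxd\Delta(F)\ge2\lboxd F$ for $F$ contained in a line, either way producing a bound at least as strong as $\tfrac{n}{2}s$. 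The real content is the separated case.

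There, fix $\delta>0$, extract via Lemma~\ref{chessboard} the $100\delta$-separated sets $C_i\subset F\cap c_i$ with $\#C_i\gtrsim\delta^{-s+\epsilon}$, cover $\Delta_n(F)$ by disjoint $\delta$-cubes, and pigeonhole onto a single cube $K$ --- a fixed $(n-1)$-simplex shape $\sigma$ --- realised by at least $\prod_i\#C_i/N_\delta(\Delta_n(F))\gtrsim\delta^{-ns+n\epsilon}/N_\delta(\Delta_n(F))$ many $\delta$-separated transversals. The target is then the $\mathbb{R}^n$-analogue of Lemma~\ref{lmDCR} for the $\Delta_n$ configuration: \emph{a fixed $(n-1)$-simplex shape is realised by at most $\lesssim N_\delta(F)^{n/2}$ many $\delta$-separated transversals with vertices in $F$}, which together with the pigeonholing and $N_\delta(F)\approx\delta^{-s}$ rearranges, on letting $\epsilon\to0$, to $\lboxd\Delta_n(F)\ge\tfrac{n}{2}s$. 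To attack it I would, as in the proof of Lemma~\ref{lmDCR}, sum over the apex $x_1$: for fixed $x_1$ the remaining $n-1$ vertices lie on $n-1$ concentric spheres $x_1+r_jS^{n-1}$ and realise the fixed $(n-2)$-face $\sigma'$ of $\sigma$, whose number of realisations is controlled by the inductive hypothesis applied to the $n-1$ vertex sets (the distance matrix of affinely independent points does not see the ambient dimension), while the number of ways of attaching $x_1$ to a given realisation of $\sigma'$ equals the number of distinct distance tuples $(|x_1-x_2|,\dots,|x_1-x_n|)$ as $x_1$ ranges over $F\cap c_1$. When $s$ is close to $n$ I would supplement this with pinned-distance improvements in the spirit of the $\Assouad F>1$ part of the proof of Theorem~\ref{coMain}, using Theorem~\ref{S}, to reach the cap $\binom{n}{2}$.

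The main obstacle is exactly the one flagged in Section~\ref{Fur}: controlling that last count. The configuration $\Delta_n$ in $\mathbb{R}^n$ differs from the full-simplex configuration $\Delta_{n+1}$ in $\mathbb{R}^n$ --- the general-$n$ analogue of the planar triangle problem handled by Lemma~\ref{lmDCR} --- by a single vertex, and that missing vertex is decisive. In Lemma~\ref{lmDCR} two concentric circles through two fixed $\delta$-squares pin their common centre to $O(1)$ choices, so the incidence count is bounded by the square of the covering number of the vertex set; but for $\Delta_n$ in $\mathbb{R}^n$ the $n-1$ spheres about a fixed apex impose only $n-1$ constraints on the apex in $\mathbb{R}^n$, leaving a $1$-parameter (circular) family of admissible apices (whereas $n$ spheres would determine the apex up to $O(1)$ choices). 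Equivalently, the map $x_1\mapsto(|x_1-x_2|,\dots,|x_1-x_n|)$ is a submersion $\mathbb{R}^n\to\mathbb{R}^{n-1}$ with $1$-dimensional circular fibres, so it may collapse an $s$-dimensional piece of $F\cap c_1$ aligned along those fibres --- the degenerate-sphere phenomenon of Section~\ref{Fur}, present already for $n=2$ --- and the per-sphere point counts cannot simply be multiplied. Overcoming this seems to need a genuinely new ingredient: either an averaging of the configuration over the $\gtrsim\delta^{-(n-1)s/2}$ realisations of $\sigma'$ so as to destroy this alignment, or a multi-pinned distance theorem valid in the range $s\le n-1$ not covered by Theorem~\ref{S}. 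Balancing that trade-off quantitatively is, I expect, the crux that keeps the conjecture open for $n\ge3$; a secondary nuisance is the bookkeeping of the cap $\min\{\tfrac{n}{2}s,\binom{n}{2}\}$ near $s\approx n-1$, in analogy with Theorem~\ref{coMain}.
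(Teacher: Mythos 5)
This statement is labelled a conjecture in the paper, and the paper gives no proof of it: Section~\ref{Fur} is devoted precisely to explaining why the paper's methods do not extend to give it. There is therefore no paper proof against which to compare your attempt; it must be judged on its own.

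What you have written is a faithful reconstruction of the natural strategy together with an honest account of why it breaks. The reductions you sketch (compactness and weak-tangent invariance of $\Delta_n$, Lemma~\ref{lm2}, the dyadic dichotomy, and the pigeonhole producing $\gtrsim\delta^{-ns+n\epsilon}/N_\delta(\Delta_n(F))$ many $\delta$-separated transversals realising one fixed simplex shape) are sound, and they reduce the conjecture to exactly the right combinatorial target: a bound of order $N_\delta(F)^{n/2}$ on the number of $\delta$-separated realisations of a fixed $(n-1)$-simplex with vertices in $F$. But that is precisely the missing step, and your diagnosis of why Lemma~\ref{lmDCR} does not deliver it is correct. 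In $\mathbb{R}^2$ the apex of the triangle is pinned to $O(1)$ $\delta$-cubes by the transversal intersection of two annuli; for $\Delta_n$ in $\mathbb{R}^n$ the apex is required to lie on only $n-1$ spheres about the other vertices, whose common intersection is generically a circle, and the pinning is lost. This is the same degeneracy the paper records in Section~\ref{Fur}, and it is the reason the statement appears there as a conjecture rather than a theorem.

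Two further gaps are worth flagging. First, the degenerate branch of your dichotomy (when $\mathcal{H}^s$ concentrates on a proper affine subspace $V$) reduces, for $n\ge4$, to the \emph{full}-simplex configuration $\Delta_n$ for a set in $\mathbb{R}^{n-1}$, which is not a smaller instance of the conjecture and which Section~\ref{Fur} suggests is in fact harder; so the asserted bound ``at least as strong as $\frac{n}{2}s$'' there is not justified. Second, the $n=2$ base case of your induction, even capped at $1$, is the Assouad-dimension Falconer distance problem in the plane, which is itself open and stronger than anything the paper proves. In short the proposal does not prove the conjecture, as you concede, and the central gap coincides exactly with the obstacle the paper itself flags.
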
    
	More generally we can ask the following question
	\begin{ques}
		Let $F\subset\mathbb{R}^n$ with $n\geq 1$, then for an integer $k\in [2,n+1]$ what is the relation between $\Assouad F$ and $\Delta_k(F)$?
	\end{ques}
	In particular, when $k=2$ we encounter the distance set problem and in this case we have the following result by \cite[Theorem 2.9]{FHY}.
	\begin{thm*}
		Let $F\subset\mathbb{R}^n$ with $n\geq 1$, we have the following result
		\[
		\Assouad D(F)\geq \frac{1}{d}\Assouad F.
		\]
		The inequality is strict unless $\Assouad D(F)=1$.
	\end{thm*}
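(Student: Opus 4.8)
The plan is to combine the weak--tangent reduction already used above with a combinatorial ``iterated pinning'' estimate for distance sets, and to extract the strict inequality from a porosity refinement. Write $s=\Assouad F$. Unwinding the definition of the Assouad dimension, it suffices to show that for every $t<s$ there are points $x_j\in F$ and scales $R_j>r_j>0$ with $R_j/r_j\to\infty$ and $N_{r_j}(B(x_j,R_j)\cap F)\geq (R_j/r_j)^t$, and then to produce from these a point $z_j\in D(F)$ with
\[
N_{r_j}\big(B(z_j,2R_j)\cap D(F)\big)\gtrsim (R_j/r_j)^{t/n};
\]
letting $t\uparrow s$ this forces $\Assouad D(F)\geq s/n$. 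The existence of the first data is exactly the failure of the inequality $\Assouad F\leq t$, and $N_{r_j}(B(x_j,R_j)\cap F)\lesssim (R_j/r_j)^n$ forces $R_j/r_j\to\infty$; alternatively one may first pass, via the analogues of Lemmas \ref{lm2}, \ref{lm3} and \ref{lm1} for $D(F)$, to a weak tangent $E$ of $F$ with $\Haus E=s$ and $\mathcal H^s(E)>0$, and work with a Frostman measure on $E$.

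The combinatorial heart is the following estimate, proved by induction on $n\geq 2$: for every compact $A\subset S^{n-1}$ and every $\rho>0$ one has $N_\rho(D(A))\gtrsim N_\rho(A)^{1/(n-1)}$, the implied constant depending only on $n$, where $D(A)$ is the Euclidean chordal distance set. For $n=2$ one identifies the distance set of $A\subset S^1$ with the image of the angular difference set of $A$ under $\alpha\mapsto 2\sin(\alpha/2)$; this map is bi-Lipschitz away from $\alpha=\pi$, and since $N_\rho(S-S)\geq N_\rho(S-x)=N_\rho(S)$ for any $S\subset\mathbb R$ and $x\in S$, and a pin $x_0\in A$ can always be chosen so that a positive proportion of $A$ is non-antipodal to it (if most of $A$ sits near the antipode of some point, pin near that antipode instead), one gets $N_\rho(D(A))\gtrsim N_\rho(A)$. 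For the inductive step pin $x_0\in A$; the chordal distance to $x_0$ foliates $S^{n-1}$ into latitude $(n-2)$-spheres, so the $N_\rho(A)$ cubes of $A$ occupy at most $N_\rho(D_{x_0}(A))\leq N_\rho(D(A))$ distance bins, and pigeonholing puts $\gtrsim N_\rho(A)/N_\rho(D(A))$ of them on a single latitude sphere of some radius $\rho'$. If $\rho'\lesssim\rho$ that slice has $O(1)$ cubes and $N_\rho(D(A))\gtrsim N_\rho(A)$ directly; otherwise rescale the latitude to the unit $(n-2)$-sphere, apply the inductive hypothesis, and rescale back, using that on a $\rho$-neighbourhood of a round sphere of radius $\rho'\gtrsim\rho$ the chordal distances agree up to $O(\rho)$ with those of the radial projections onto it. This yields
\[
N_\rho(D(A))\gtrsim \big(N_\rho(A)/N_\rho(D(A))\big)^{1/(n-2)},\qquad\text{i.e.}\qquad N_\rho(D(A))^{\,n-1}\gtrsim N_\rho(A).
\]

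Given the sphere estimate, one last pinning step completes the reduction. Inside $B(x_j,R_j)\cap F$ pin a point $x_0$ and distribute its $\geq (R_j/r_j)^t$ cubes of side $r_j$ among the spheres centred at $x_0$; there are at most $N_{r_j}(D(F))$ occupied distance bins, so some sphere $S$ about $x_0$, of radius $\rho\leq 2R_j$, carries $\gtrsim (R_j/r_j)^t/P$ of those cubes, where $P:=N_{r_j}(B(z_0,2R_j)\cap D(F))$ for a suitable $z_0\in D(F)$. Radially project onto $S$, rescale $S$ to $S^{n-1}$, apply the sphere estimate to the rescaled set (which has $\gtrsim (R_j/r_j)^t/P$ cubes at scale $r_j/\rho$), and rescale back: its distance set then has $\gtrsim\big((R_j/r_j)^t/P\big)^{1/(n-1)}$ cubes at scale $r_j$, and it lies within $O(r_j)$ of $D(B(x_j,R_j)\cap F)\subset B(z_0,2R_j)\cap D(F)$. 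Hence $P\gtrsim\big((R_j/r_j)^t/P\big)^{1/(n-1)}$, i.e. $P^{\,n}\gtrsim (R_j/r_j)^t$, which is the displayed bound with $z_j=z_0$; letting $t\uparrow s$ gives $\Assouad D(F)\geq \tfrac1n\Assouad F$.

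For the strict inequality, suppose $\Assouad D(F)=\tfrac1n\Assouad F=:v$ with $v<1$. Then $D(F)$, a subset of $\mathbb R$ of Assouad dimension strictly below $1$, is uniformly porous, so there is $\eta>0$ such that every ball centred on $D(F)$ contains a subinterval of comparable length that $D(F)$ essentially avoids. Re-running the pigeonholing above but first discarding, at each stage, the bounded fraction of cubes whose mutual distances land in such a gap produces a scale-independent gain on the exponent $v$, contradicting the assumption; equivalently, equality throughout the chain would force the relevant weak tangent of $F$ to be supported on a finite union of round spheres, and in the base case on arithmetic-progression-type subsets of a circle, which is incompatible with carrying positive $\mathcal H^{s}$ measure at the exact dimension $s$. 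The main obstacle in the whole argument is the geometric bookkeeping inside the sphere estimate: one must guarantee, uniformly in the pin and the scale, that the pigeonholed latitude sphere is genuinely $(n-2)$-dimensional and that its $\rho$-neighbourhood on $S^{n-1}$ is comparable to a $\rho$-neighbourhood of a round $(n-2)$-sphere rather than being smeared out tangentially, which forces the pins to be chosen in general position and the near-pole and near-antipode degenerate latitudes to be handled separately; this is the higher-dimensional incarnation of the antipodal degeneracy already visible on the circle, and it is also what makes the porosity gain delicate to quantify.
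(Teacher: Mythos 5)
This theorem is not proved in the paper: it is quoted verbatim from reference [FHY18, Theorem 2.9] in the final section, so there is no in-paper proof to compare your argument against, and what follows is an assessment of your reconstruction on its own merits. (The `$d$' in the displayed inequality is a typo for `$n$', which you correctly read.)

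For the main bound $\Assouad D(F)\ge\tfrac{1}{n}\Assouad F$ the pin-and-pigeonhole recursion is a legitimate strategy, and the closing exponent arithmetic $P\gtrsim\big((R_j/r_j)^t/P\big)^{1/(n-1)}$, hence $P^n\gtrsim(R_j/r_j)^t$, is correct. The inductive sphere lemma $N_\rho(D(A))\gtrsim N_\rho(A)^{1/(n-1)}$ for $A\subset S^{n-1}$ is of the right form. The genuine gap, which you flag but do not close, is the antipodal degeneracy: the chordal distance to the pin has vanishing derivative at the antipode, so a $\rho$-bin of distances near the maximal value $2$ corresponds to an angular band of width of order $\sqrt\rho$ rather than $\rho$, and radially projecting that band onto a single latitude $(n-2)$-sphere is not an $O(\rho)$-distortion. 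You give a re-pinning fix in the base case $n=2$, but in the inductive step you never verify that, at each of the $n-2$ rescalings, a pin can be chosen so that a bounded-away-from-zero fraction of the surviving cubes is non-degenerate, with constants independent of the scale and of the (rescaled) set. That uniformity is exactly what is needed to propagate a scale-free constant through the induction, and it must be written out rather than waved at. I believe this can be made to work, but as presented the sphere lemma, and hence the whole argument, is incomplete.

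The strict inequality clause is not proved. Discarding the cubes whose pairwise distances land in a porosity gap of $D(F)$ does not by itself improve the exponent: $P$ is already defined as the number of occupied $r_j$-bins, and throwing cubes away only weakens the lower bound on the number of occupied bins; you would need to show that porosity of $D(F)$ forces the surviving cubes to be $\emph{more}$ concentrated per bin by a fixed multiplicative factor at every scale, and you do not do so. The alternative sketch jumps from ``equality throughout the chain'' to ``supported on a finite union of spheres'' to ``arithmetic-progression-type subsets of a circle'' without justification, and the concluding clause (``incompatible with carrying positive $\mathcal H^s$ measure at the exact dimension $s$'') is false as a general principle: a full circle carries positive $\mathcal H^1$ measure at dimension $1$. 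Pinning down the equality case requires a separate rigidity statement, and nothing in your write-up supplies it.
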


	\section{Acknowledgement}
	HY was financially supported by the University of St Andrews.

	\providecommand{\bysame}{\leavevmode\hbox to3em{\hrulefill}\thinspace}
	\providecommand{\MR}{\relax\ifhmode\unskip\space\fi MR }
	\providecommand{\MRhref}[2]{%
		\href{http://www.ams.org/mathscinet-getitem?mr=#1}{#2}
	}
	\providecommand{\href}[2]{#2}
	
	\bibliographystyle{amsalpha}
	
\end{document}